\documentclass[11pt,amscd,amssymb,verbatim]{amsart}
\usepackage{amssymb,amsmath,amsthm,mathrsfs,graphicx,xcolor,tikz}
\usepackage{amsmath,amssymb,amsfonts,amsthm,graphicx}
\usepackage[mathscr]{euscript}
\usepackage{epsfig}
\usepackage{subfigure}
\usepackage{graphicx}
\usepackage{amsbsy}
\usepackage{cite}
\usepackage{amsmath}
\usepackage{natbib}
\makeatletter
\renewenvironment{proof}[1][\proofname]{\par
\pushQED{\qed}%
\normalfont \topsep6\p@\@plus6\p@\relax
\trivlist
\item\relax
{\bfseries
#1\@addpunct{.}}\\
}{%
\popQED\endtrivlist\@endpefalse
}
\makeatother
\usepackage{color}
\usepackage{natbib}
\usepackage{nameref}
\theoremstyle{definition}
 \usepackage[colorlinks,urlcolor=blue]{hyperref}
    \usepackage{url}
\newtheorem{exmp}{Example}[section]
\usepackage{xspace}

\numberwithin{equation}{section}
\voffset -.5 true in
\hoffset -0.35 true in %was -0.5
\textheight 7.9  true in
\textwidth  5.95 true in
\setlength{\topmargin}{30mm}
\setlength{\oddsidemargin}{20mm}
\newtheorem{theorem}{Theorem}

\newtheorem{corollary}{Corollary}
\newtheorem{remark}{Remark}
\newtheorem{definition}{Definition}
\newtheorem{proposition}{Proposition}
\begin{document}

\title[Some results about varextropy] {Some results about varextropy  and weighted varextropy functions \vspace{1cm}}
%\addtocounter{footnote}{1} \footnotetext{Supported in part
%by NSERC, Canada}
 \maketitle
 \begin{center}
\author{ F. Goodarzi\footnote{Corresponding author: {\tt f-goodarzi@kashanu.ac.ir}} $^1$,  R. Zamini$^2$\\
\small $^1$   Department of Statistics, Faculty of Mathematical Sciences, University of Kashan, Kashan,\ Iran.
	\small $^2$  Department  of Mathematics, Faculty of Mathematical Sciences and  Computer, Kharazmi University, Thehran, Iran. \\}
\end{center}
\date{}

\vspace{0.09 in}

\begin{abstract}
 In this paper, we investigate several properties of the weighted varextropy measure and 
obtain it for specific  distribution functions, such as the equilibrium and weighted ditributions. We also obtain bounds for the weighted varextropy, as well as for weighted residual varextropy and weighted past varextropy. 
%We provides a useful expression for the varextropy of the lifetime of coherent systems. 
 Additionally, we derive an expression for the varextropy of the lifetime of coherent systems. 
A new stochastic ordering, referred to as weighted varextropy ordering, is introduced, and some of its key properties are explored.
%Furthermore, a new stochastic comparison method, named weighted varextropy ordering, is introduced and some of its properties are presented. 
Furthermore, we propose two nonparametric estimators for the weighted varextropy function. 
A simulation study is conducted to evaluate the performance of these estimators in terms of mean squared error (MSE) and bias. Finally, we provide a characterization of the reciprocal distribution based on the weighted varextropy measure. Some tests for reciprocal distribution are constructed by using the proposed estimators and the powers of the tests are compared with the powers of kolmogorov-Smirnov (KS) test. Application to real data is also reported.
\end{abstract}

\vspace{0.09 in}
\noindent{\bf Keywords:}
Varextropy, Weighted varextropy, Stochastic ordering, Nonparametric estimation, Coherent systems.

{\bf MSC2020}: {$\rm 94A17$; $\rm 62B10$; $\rm 62G20$}
\section{Introduction}
Let $X$ be an absolutely continuous random variable with probability density function (pdf) $f$, distribution function $F$ and survival function $\overline F$. Entropy, as an uncertainty measure is defined as  the expectation of the information content of $X$, and is given by \cite{Shannon} as follows
\begin{align}
H(X)=-\int_{-\infty}^{+\infty}f(x)\log f(x)dx.
\end{align}

It is remarked that, the variance entropy (varentropy) of a random variable $X$ is defined as 
\begin{align}\label{VEn}
VE(X)&=Var[-\log f(X)]=E[(-\log f(X))^2]-[H(X)]^2\nonumber\\&=
\int_{-\infty}^{+\infty}f(x)[\log f(x)]^2dx-\left[\int_{-\infty}^{+\infty}f(x)\log f(x)dx\right]^2.
\end{align} 
The varentropy is positive functional and does not depend on location and scale parameters, i.e. $Var[-\log f(X)]= Var[-\log g(X)]$, where $f(x)=\sigma^{-1}g((x-\mu)/\sigma)$.

Another measure of uncertainty is the extropy, which was defined by \cite{Lad} as dual to the entropy and is given as:
\begin{align}\label{ex}
J(X)=-\frac{1}{2}\int_{-\infty}^{+\infty}f^2(x)dx.
\end{align}
%Notice that extropy is always non-positive.
If the extropy of random variable $X$ is less than that of random variable $Y$, then $X$ is said to have more uncertainty than $Y$. \cite{Lad} displayed some interesting properties of this measure such as the maximum extropy distribution and some statistical applications.  For more studies  on extropy, see \cite{Qiu2017}, \cite{Qiu2018a}, \cite{Qiu2018b}, \cite{Yang},  among others. 
%This measure is the extropy of the residual lifetime $X_t=[X-t| X\geq t]$ and given by
%\begin{align}
%J_t(X)=J(X_t)&=-\frac{1}{2}\int_{0}^{\infty}f^2_t(x)dx\nonumber
%\\&=-\frac{1}{2{\overline F^2(t)}}\int_{t}^{\infty}f^2(x)dx, \ \ t\geq 0, 
%\end{align}  
%where $f_t(x)=\frac{f(x+t)}{\overline F(t)}, \ \ x\geq 0, \ \ t\geq 0$ is the density function $X_t$. One can notice that $J_0(X)=J(X)$. The residual extropy measures residual uncertainty of the random variable $X$. \cite{Qiu2018a} investigated some properties of this measure such as monotone properties and 
%characterization results. They showed that the residual extropy of a random variable is determined uniquely by its failure rate function. \cite{Krishnan}
%introduced the past extropy of $X$. This measure is the extropy of past lifetime $X_{[t]}=[X| X\leq t]$ with the density function 
%$f_{[t]}(x)=\frac{f(x)}{F(t)}, \ 0<x<t$, and given by:
%\begin{align}
%J(X_{[t]})&=-\frac{1}{2}\int_{0}^{+\infty}{(f_{[t]}(x))}^2dx\nonumber\\&=-\frac{1}{2{F^2(t)}}\int_{0}^{t}f^2(x)dx.
%\end{align}
%If $t\to \infty$, then $J(X_{[t]})\to J(X).$  
%$J(X_{[t]})$ measures the uncertainty in a past lifetime distribution. Past extropy has applications in the context of information theory, reliability, survival analysis, insurance and forensic science. To observe the some properties of this measure, one can study \cite{Krishnan} and \cite{Kamari}. 

The weighted extropy can be seen as a measure to quantify the amount of uncertainty present in a random variable. This uncertainty depends on the probabilities assigned to different events and their relevance to the specific qualitative characteristic being considered.
The weighted extropy, as proposed by \cite{Sattar} and \cite{Bala}, is defined as
\begin{align}\label{exw}
J^w(x)=-\frac{1}{2}\int_{-\infty}^{+\infty}xf^2(x)dx.
\end{align}
In general, the weighted extropy measure can be defined as 
\begin{align}
J_\phi^w(x)=-\frac{1}{2}\int_{-\infty}^{+\infty}\phi(x)f^2(x)dx,
\end{align}
where $\phi(X)$ is considered as the weight or utility function and it is obvious that $\phi(X) = 1$ and $\phi(X) = X$, respectively results to 
Equations \eqref{ex} and \eqref{exw}. 

Recently \cite{Vaselabadi} introduced a measure of uncertainty which can be used as an alternative measure to \eqref{VEn}. This measure is known as varextropy and defined as 
\begin{align}\label{eqve}
VJ(X)&=Var\left[-\frac{1}{2}f(X)\right]\nonumber\\&=\frac{1}{4}E(f^2(X))-J^2(X).
\end{align}   
As they stated, when the extropy of two variables are equal, the varextropy is useful to determine which extropy would be more appropriate to measure uncertainty.  They also stated that the varextropy measure is more flexible than varentropy, i.e. the varextropy is free of the model parameters in some distributions. 
It is well-known that for exponential distribution, varextropy is independent of the lifetime of the system and remains unchanged for the system and remains unchanged for the symmetric distributions. \cite{Vaselabadi} showed, for the continuous symmetric distribution $F(x)$, varextropy of the series and parallel systems are the same. They obtained some of properties this measure based on order statistics, record values and proportional hazard rate models. \cite{Good} also showed that, if extropy and varextropy for the series and parallel systems are equal then $F(x)$ is a symmetric cdf. In addition, she obtained a lower bound for varextropy. 

Sometimes, in addition to the same extropy for two random variables, varextropy may also be the same for these two variables, and therefore another criterion should be used to express their uncertainty. To this end, \cite{ChGu} defined weighted extropy as follows:
\begin{align}\label{vew}
VJ^w(X)=\frac{1}{4}\int_{-\infty}^{+\infty}x^2f^3(x)dx-(J^w(X))^2.
\end{align}
Recently, \cite{Zhang2025} defined the following general form for weighted varextropy as
\begin{align}\label{4Tir}
VJ_{\phi}^w(X)=\frac{1}{4}\int_{-\infty}^{+\infty}{\phi}^2(x)f^3(x)dx-(J_{\phi}^w(X))^2.
\end{align}
They also introduced the concept of the weighted residual varextropy and investigated its behavior under arbitrary monotonic transformations. Furthermore, they introduced non-parametric estimators for weighted varextropy and weighted residual varextropy.

Before presenting the main results of the paper, it is necessary to provide a theorem and some the preliminary definitions.
\begin{theorem}
The classical Hardy inequality reads 
\begin{eqnarray}\label{T1.1}
\int_{0}^{+\infty}\Big(\frac{1}{x}\int_{0}^{x}f(t)dt\Big)^pdx\leq\Big (\frac{p}{p-1}\Big)^p\int_{0}^{+\infty}f^p(x)dx, \ \ p>1,
\end{eqnarray}
where $f$ is a non-negative function such that $f\in L^p(0, \infty)$.  Hardy proved  \eqref{T1.1} in \cite{Hardy}.
\end{theorem}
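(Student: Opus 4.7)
The plan is to give the classical proof via integration by parts followed by Hölder's inequality. Set $F(x):=\int_0^x f(t)\,dt$, so that the left-hand side of \eqref{T1.1} equals $\int_0^\infty F(x)^p\,x^{-p}\,dx$. Integrating by parts with $u=F(x)^p$ and $dv=x^{-p}\,dx$ (so that $v=-x^{1-p}/(p-1)$ and $du=pF(x)^{p-1}f(x)\,dx$), and provided the boundary terms vanish, yields
\begin{equation*}
\int_0^\infty \frac{F(x)^p}{x^p}\,dx \;=\; \frac{p}{p-1}\int_0^\infty \left(\frac{F(x)}{x}\right)^{p-1} f(x)\,dx.
\end{equation*}

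Next I would apply Hölder's inequality with conjugate exponents $p/(p-1)$ and $p$ to the right-hand side, writing the integrand as $(F(x)/x)^{p-1}\cdot f(x)$, to obtain
\begin{equation*}
\int_0^\infty \left(\frac{F(x)}{x}\right)^{p-1} f(x)\,dx \;\leq\; \left(\int_0^\infty \left(\frac{F(x)}{x}\right)^{p} dx\right)^{(p-1)/p}\left(\int_0^\infty f^{p}(x)\,dx\right)^{1/p}.
\end{equation*}
Denote $I:=\int_0^\infty (F(x)/x)^p\,dx$ and $J:=\int_0^\infty f^p(x)\,dx$. The two displays combine to give $I\leq \tfrac{p}{p-1}\,I^{(p-1)/p} J^{1/p}$. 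Dividing by $I^{(p-1)/p}$ (the case $I=0$ being trivial) and raising to the $p$-th power delivers \eqref{T1.1}.

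The main obstacle is justifying that the boundary terms $[-F(x)^p x^{1-p}/(p-1)]_0^\infty$ vanish and, simultaneously, that $I$ is a priori finite so that the cancellation $I/I^{(p-1)/p}=I^{1/p}$ is legitimate. The standard workaround is to prove the inequality first for the truncation $f_n(x):=f(x)\,\mathbf{1}_{[1/n,n]}(x)$: the corresponding primitive $F_n$ is bounded and vanishes on $[0,1/n]$, so both boundary terms and the finiteness of $I_n$ become automatic. Monotone convergence on each side then extends the estimate from $f_n$ to $f$, yielding the inequality for arbitrary non-negative $f\in L^p(0,\infty)$. As a cross-check, a direct Hölder estimate gives $F(x)\le x^{1-1/p}\|f\|_p$, which independently confirms that $F(x)^p x^{1-p}\to 0$ as $x\to 0^+$.
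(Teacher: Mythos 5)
The paper itself gives no proof of this theorem: it simply states the inequality and refers the reader to Hardy's original 1925 note, so there is no in-paper argument to compare yours against. Your proposal is the standard and correct proof: integration by parts with $u=F^p$, $dv=x^{-p}dx$ reduces the problem to $I\le \tfrac{p}{p-1}I^{(p-1)/p}J^{1/p}$ via H\"older with exponents $p/(p-1)$ and $p$, and the truncation $f_n=f\,\mathbf{1}_{[1/n,n]}$ correctly handles the two delicate points (vanishing of the boundary terms and a priori finiteness of $I_n$, the latter because $F_n$ is bounded and supported away from the origin, so $(F_n(x)/x)^p\lesssim x^{-p}$ is integrable at infinity for $p>1$), after which monotone convergence passes to general $f$. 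The only slight imprecision is in your final cross-check: the bound $F(x)\le x^{1-1/p}\lVert f\rVert_p$ with the global norm only shows $F(x)^px^{1-p}$ stays bounded near $0$; to get that it tends to $0$ you need the local norm $\lVert f\rVert_{L^p(0,x)}\to 0$. Since that remark is redundant given the truncation argument, it does not affect the validity of the proof.
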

\begin{definition}
\begin{enumerate}
Suppose that $X$ and $Y$ are two random variables with density functions $f$ and $g$ and distribution functions $F(x)$ and $G(x)$, respectively. Then, 
\item[(1)] The random variable $X$ is said to be smaller than $Y$ in the varextropy order, denoted
by $X\leq_{VJ}Y$, if $VJ(X)\leq VJ(Y)$.

\item[(2)] $X$ is smaller than $Y$ in the dispersive order, denoted by $X\leq_{disp}Y$, if $f (F^{-1}(v))\geq g(G^{-1}(v))$
for all $v\in (0, 1)$, where $F^{-1}$ and $G^{-1}$ are right continuous inverses of $F$ and $G$, respectively.
\end{enumerate}
\end{definition}
%%%%%%%%%%%%%%%%%%%%%%%%%%%%%%%%%%%%%%%%%%%%%%%%%%%%%%%%%%%%%%%%%%%%
%%%%%%%%%%%%%%%%%%%%%%%%%%%%%%%%%%%%%%%%%%%%%%%%%%%%%%%%%%%%%%%%%%%%
\section{Weighted varextropy measure}
In recent years, the analysis of variability in uncertainty measures has attracted considerable attention within the field of information theory. 
In this context, the concepts of varextropy and weighted varextropy have been investigated by several researchers. 
%This paper focuses on the study of weighted varextropy. 
In this paper, we aim to conduct a more comprehensive study of the weighted varextropy measure. First, we present some examples to illustrate the importance of this measure.
Note that, in equation \eqref{4Tir} when $\phi(x)=1$ and $\phi(x)=x$, equations \eqref{eqve} and \eqref{vew} are obtained, respectively.
\begin{exmp}
Suppose that $X$ and $Y$ have the beta distributions with parameters $(a, 1)$ and $(1,a)$, respectively for $a>0$. Then $J(X)=J(Y)=-\frac{a^2}{2(2a-1)}$ and
$VJ(X)=VJ(Y)=\frac{a^3(a-1)^2}{4(3a-2)(2a-1)^2}$, while weighted varextropy of $X$ and $Y$ are different and equal to $\frac{a^2}{48}$ and 
$\frac{a^2(5a^2-5a+2)}{48(9a^2-9a+2)(2a-1)^2}$, respectively.
\end{exmp}
\begin{exmp}
Let $X$ and $Y$ have the beta distributions with the following pdf 
\begin{eqnarray*}
\label{fifteen}
f_{X}(x)=\left\{\begin{array}{ll}a(a+1)x^{a-1}(1-x)&a>0,\, 0<x<1,\\
0&otherwise.
\end{array}\right.
\hspace{0.5cm}f_{Y}(y)=\left\{\begin{array}{ll}a(a+1)y(1-y)^{a-1}&a>0,\, 0<y<1,\\
0&otherwise.
\end{array}\right.
\end{eqnarray*}
\end{exmp}
Then extropy and varextropy of $X$ and $Y$ are equal and are obtained  $-\frac{a(a^2+2a+1)}{2(4a^2-1)}$ and $\frac{a^3(5a^6+6a^5-7a^4-6a^3+9a^2+8a+1)}{4(27a^3-18a^2-3a+2)(4a^2-1)^2}$, respectively. Also we get \[VJ^w(X)=\frac{a^2(5a^4+15a^3+17a^2+9a+2)}{48(9a^2+9a+2){(2a+1)}^2},\] \[VJ^w(Y)=\frac{a^2(373a^6+746a^5+308a^4-130a^3-13a^2+104a+52)}{48(81a^4-45a^2+4){(4a^2-1)}^2},\]
and $VJ^w(Y)$ is fininit for $a\ne \frac{1}{3}$, $a\ne \frac{1}{2}$ and $a\ne \frac{2}{3}$.
In Figure \ref{pic1}, the graph of $VJ^w(X)$ and $VJ^w(Y)$ in terms of $a$ is depicted for $\frac{2}{3} <a< 9$, which acknowledges that $VJ^w(X)>VJ^w(Y)$
for $a<2$ and $VJ^w(X)<VJ^w(Y)$ for $a>2$.  
\begin{figure}
\begin{center}
\includegraphics[width=5cm, height=5cm]{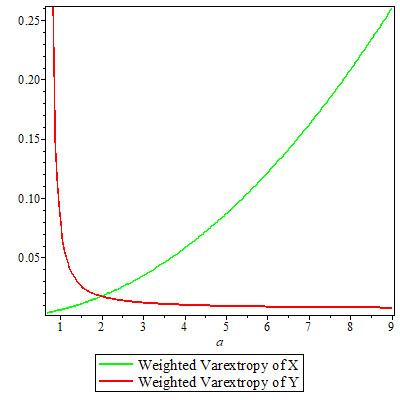}
\caption{\footnotesize{Plots of $VJ^w(X)$ and $VJ^w(Y)$.}}\label{pic1}
\end{center}
\end{figure}
%%%%%%%%%%%%%%%%%%%%%%%%%%%%%%%%%%%%%%%%%%%%%%%%%%%%%%%%%%%%%%%%%
\begin{exmp}
Suppose that $X$ has a Weibull distribution with distribution function $F(x)=1-e^{-\lambda x^{\alpha}}, \, x>0$ for $\alpha>0$ and $\lambda>0$. Then the weighted varextropy is equal to $\frac{\alpha^2}{54}-\frac{1}{64}$, that does not depend on $\lambda$ parameter. Also, if $\alpha=1$, that is  $X$ has exponential distribution then $VJ^w(X)=\frac{5}{1728}$.  
\end{exmp}
\begin{exmp}\label{eglap}
Let $X$ has Laplace distribution with pmf $f(x)=\frac{1}{2\beta}\exp\{-\frac{|x|}{\beta}\}, \, \beta>0$, then $VJ^w(X)=\frac{1}{216}$, whereas $VJ(X)=\frac{1}{192\beta^2}$. Therefore, the advantage of weighted varextropy is that, unlike varextropy, it does not depend on the parameter.
\end{exmp}
%\begin{eg}
%If $X$ has a beta distribution with parameters $\alpha$ and $\beta$, then 
%\begin{align}
%VJ^w(X)=\frac{B(3\alpha, 3\beta-2)}{4B^3(\alpha, \beta)}-\frac{B^2(2\alpha, 2\beta-1)}{4B^4(\alpha, \beta)},
%\end{align}
%where $B(\cdot, \cdot)$ is beta function. It is remarkable here that, if $\beta=1$, i.e., $X$ has power function then $VJ^w(X)=\frac{\alpha^2}{48}$.
%\end{eg}
In the following, we discuss some properties of weighted varextropy, including its behavior for weighted distributions and its changes under monotonic transformations.
\begin{proposition}
Let $X$ be a non-negative absolutely  continuous random variable with $\mu=E(X)$. Then considering the equilibrium function $f_Y(x)=\frac{\overline F(x)}{\mu}, \, 0<x<+\infty$ where $\overline F(x)=1-F(x)$ is survival function of $X$, we have
\begin{align}
VJ^w(Y)=\frac{1}{4}\left[\int_{0}^{1}\frac{(1-u)^3}{\mu^3}\frac{(F^{-1}(u))^2}{f(F^{-1}(u))}du-\left(\int_{0}^1\frac{(1-u)^2}{\mu^2}\frac{F^{-1}(u)}{f(F^{-1}(u))}du\right)^2\right].
\end{align}
\end{proposition}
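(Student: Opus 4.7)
The plan is to compute $VJ^w(Y)$ directly from its definition in \eqref{vew}, using the equilibrium density $f_Y(x)=\overline{F}(x)/\mu$, and then transform the resulting integrals by a change of variables to express them in terms of the quantile function $F^{-1}$.

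First I would write
\begin{align*}
VJ^w(Y)=\frac{1}{4}\int_{0}^{+\infty}x^2 f_Y^3(x)\,dx-\Bigl(J^w(Y)\Bigr)^2,
\end{align*}
and substitute $f_Y(x)=\overline{F}(x)/\mu$ to obtain
\begin{align*}
\frac{1}{4}\int_{0}^{+\infty}x^2\frac{\overline{F}^{\,3}(x)}{\mu^3}\,dx
-\frac{1}{4}\left(\int_{0}^{+\infty}x\,\frac{\overline{F}^{\,2}(x)}{\mu^2}\,dx\right)^{\!2},
\end{align*}
where I have also substituted $f_Y$ into the definition $J^w(Y)=-\tfrac{1}{2}\int x f_Y^2(x)\,dx$ and squared it.

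Next I would perform the probability integral transform $u=F(x)$ (equivalently $x=F^{-1}(u)$), so that $du=f(x)\,dx$, i.e.\ $dx=du/f(F^{-1}(u))$, with $\overline{F}(x)=1-u$ and the limits becoming $0$ and $1$. Applied to each of the two integrals above, this yields exactly
\begin{align*}
\int_{0}^{1}\frac{(1-u)^3}{\mu^3}\,\frac{(F^{-1}(u))^2}{f(F^{-1}(u))}\,du
\quad\text{and}\quad
\int_{0}^{1}\frac{(1-u)^2}{\mu^2}\,\frac{F^{-1}(u)}{f(F^{-1}(u))}\,du,
\end{align*}
which, after squaring the second one and reassembling the two pieces with the factor $1/4$, gives the claimed identity.

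The argument is essentially a direct calculation, so there is no substantial obstacle; the only point requiring care is the change of variables, where one must verify that $F$ is strictly increasing on the support of $X$ so that $F^{-1}$ is well defined and the substitution is valid. This is covered by the absolute continuity assumption on $X$ (on the interior of its support). No other subtlety arises, and the proof is a short one-line verification once the substitution is set up properly.
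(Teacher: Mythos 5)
Your computation is correct and is exactly the direct calculation the paper intends (the paper states this proposition without proof): substitute $f_Y=\overline F/\mu$ into the definition of $VJ^w$ and apply the probability integral transform $u=F(x)$ to both integrals. No gap; the caveat about $F$ being strictly increasing on the support is the right one to flag.
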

\begin{proposition}
Let $Y$ is an absolutely continuous random variable with weighted distribution $f_Y(x)=\frac{\delta(x)f(x)}{E[\delta(X)]}, x\in \mathbb{R}$ with $0<E[\delta(X)]<+\infty$. Then 
\begin{align}
VJ^w(Y)=\frac{1}{4E^2[\delta(Y)]}Var[Y\delta(Y)f(Y)].
\end{align}
\end{proposition}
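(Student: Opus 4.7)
The plan is a direct computation from the definition \eqref{4Tir} with weight $\phi(x)=x$. First I would write
\[VJ^w(Y)=\frac{1}{4}\int_{-\infty}^{+\infty}x^2 f_Y^3(x)\,dx-\bigl(J^w(Y)\bigr)^2\]
and substitute $f_Y(x)=\delta(x)f(x)/E[\delta(X)]$ inside each term, pulling the normalizing constants out of the integrals so that we are left with $E^{-3}[\delta(X)]\int x^2\delta^3(x)f^3(x)\,dx$ and $E^{-2}[\delta(X)]\int x\delta^2(x)f^2(x)\,dx$ as the essential quantities to understand.

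The key step is then to recognize both of these integrals as moments of the random variable $Y\delta(Y)f(Y)$ under the weighted law. Since $E[h(Y)]=\int h(x)\,\delta(x)f(x)\,dx\,/\,E[\delta(X)]$ for any measurable $h$, choosing $h(x)=x\delta(x)f(x)$ and $h(x)=(x\delta(x)f(x))^2$ in turn yields
\[\int x\,\delta^2(x)f^2(x)\,dx=E[\delta(X)]\,E[Y\delta(Y)f(Y)],\]
\[\int x^2\delta^3(x)f^3(x)\,dx=E[\delta(X)]\,E\bigl[(Y\delta(Y)f(Y))^2\bigr].\]
Substituting these identities back into the two pieces of $VJ^w(Y)$ factors the common prefactor $1/(4E^2[\delta(X)])$ out of both terms, and in particular gives the clean intermediate formula $J^w(Y)=-E[Y\delta(Y)f(Y)]/(2E[\delta(X)])$.

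Finally I would combine the two pieces to obtain
\[VJ^w(Y)=\frac{1}{4E^2[\delta(X)]}\Bigl\{E\bigl[(Y\delta(Y)f(Y))^2\bigr]-\bigl(E[Y\delta(Y)f(Y)]\bigr)^2\Bigr\}=\frac{1}{4E^2[\delta(X)]}\,Var[Y\delta(Y)f(Y)],\]
which reproduces the claimed formula (with the understanding that the $E^2[\delta(Y)]$ appearing in the statement is the normalizing constant $E[\delta(X)]$ of the weighted density). The argument is essentially mechanical: there is no analytic obstacle, and the only care needed is the bookkeeping to ensure the normalizer $E[\delta(X)]$ is absorbed exactly once into each moment when translating between integrals against $f$ and integrals against $f_Y$.
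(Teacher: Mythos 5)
Your computation is correct and is the natural direct argument: rewriting both integrals in \eqref{4Tir} as moments of $Y\delta(Y)f(Y)$ under the weighted law so that the common factor $1/(4E^2[\delta(X)])$ emerges, which is evidently what the authors intend (the paper states this proposition without any proof). You are also right that the $E^2[\delta(Y)]$ in the displayed formula should be read as the normalizing constant $E^2[\delta(X)]$; your bookkeeping of that constant is exactly what makes the identity close.
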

\begin{proposition}
Let $Y=g(X)$ is a strictly increasing function of $X$, then
\begin{align}
VJ^w(Y)=\frac{1}{4}Var\left[\frac{g(X)}{g^{\prime}(X)}f(X)\right].
\end{align}
Note that if $Y=aX+b$, with assumption $a\ne 0$, then $VJ^w(Y)=VJ^w(X)+\frac{b^2}{a^2}VJ(X).$ Clearly, if $b=0$ then $VJ^w(Y)=VJ^w(X)$.
Also, if $Y=F_X(X)$ then we obtain $VJ^w(Y)=\frac{1}{48}$.
\end{proposition}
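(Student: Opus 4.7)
The plan is to exploit the change-of-variable formula for the density of $Y=g(X)$. Since $g$ is strictly increasing, on the range of $g$ one has $f_Y(y)=f(g^{-1}(y))/g'(g^{-1}(y))$. The strategy is to rewrite the two ingredients of $VJ^w(Y)$, namely $J^w(Y)=-\tfrac12\int y\,f_Y^2(y)\,dy$ and $\tfrac14\int y^2 f_Y^3(y)\,dy$, as expectations with respect to the distribution of $X$ via the substitution $y=g(x)$, $dy=g'(x)\,dx$, and then recognize the difference as a variance.

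Carrying this out, one Jacobian factor of $g'(x)$ combines with the two factors of $f_Y$ appearing in $J^w(Y)$ to yield
\[
J^w(Y)=-\frac12\int \frac{g(x)\,f^2(x)}{g'(x)}\,dx=-\frac12\,E\!\left[\frac{g(X)}{g'(X)}\,f(X)\right],
\]
and, by the same computation with one extra factor of $y$ and one extra factor of $f_Y$,
\[
\frac14\int y^2 f_Y^3(y)\,dy=\frac14\int \frac{g(x)^2 f^3(x)}{g'(x)^2}\,dx=\frac14\,E\!\left[\left(\frac{g(X)}{g'(X)}\,f(X)\right)^{\!2}\right].
\]
Subtracting $(J^w(Y))^2$ from this expression and invoking $E[Z^2]-(E[Z])^2=Var(Z)$ with $Z=\frac{g(X)}{g'(X)}f(X)$ yields the displayed identity.

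The three corollaries would then follow by specialization. For $g(x)=ax+b$ with $a\neq 0$ one has $g'(x)=a$, so $Z=(X+b/a)f(X)$; expanding the variance of this sum and using the two identities $Var[Xf(X)]=4\,VJ^w(X)$ and $Var[f(X)]=4\,VJ(X)$ (both immediate from the definitions of $J^w,J,VJ^w,VJ$) gives the stated affine relation. Setting $b=0$ trivially returns $VJ^w(Y)=VJ^w(X)$. For the probability-integral transform, one chooses $g=F_X$, whence $g'(x)=f(x)$ and the quantity inside the variance simplifies dramatically to $Z=F_X(X)$, a $U(0,1)$ variable; thus $VJ^w(F_X(X))=\tfrac14\,Var[U(0,1)]=\tfrac{1}{48}$.

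The only place requiring care is the substitution step: one must track the single factor of $g'(x)$ contributed by the Jacobian against the two (resp. three) factors of $1/g'$ arising from $f_Y^2$ (resp. $f_Y^3$). Once this bookkeeping is done correctly, the main identity is immediate and the corollaries are one-line specializations; there is no genuine analytic obstacle.
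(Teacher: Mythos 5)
Your derivation of the main identity is correct and is the natural (essentially forced) route: the proposition is stated without proof in the paper, and the change of variables $y=g(x)$, with one Jacobian factor $g'(x)$ set against the two (resp.\ three) factors of $1/g'$ coming from $f_Y^2$ (resp.\ $f_Y^3$), gives $J^w(Y)=-\tfrac12 E[Z]$ and $\tfrac14\int y^2f_Y^3(y)\,dy=\tfrac14 E[Z^2]$ with $Z=\frac{g(X)}{g'(X)}f(X)$, hence $VJ^w(Y)=\tfrac14\mathrm{Var}(Z)$. The specialization $Y=F_X(X)$ is also handled correctly: $Z=F_X(X)\sim U(0,1)$ and $\tfrac14\cdot\tfrac1{12}=\tfrac1{48}$.

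There is, however, a genuine gap in the affine case. With $Z=(X+b/a)f(X)$ the variance expands to
\begin{align*}
\tfrac14\mathrm{Var}[Z]=VJ^w(X)+\frac{b^2}{a^2}VJ(X)+\frac{b}{2a}\,\mathrm{Cov}\bigl[Xf(X),\,f(X)\bigr],
\end{align*}
and your claim that the expansion ``gives the stated affine relation'' silently discards the covariance term, which does not vanish in general. Concretely, for $X$ standard exponential and $a=b=1$ one finds $VJ^w(X+1)=\tfrac14\cdot\tfrac{17}{27}-\tfrac{9}{64}=\tfrac{29}{1728}$, whereas $VJ^w(X)+VJ(X)=\tfrac{5}{1728}+\tfrac{1}{48}=\tfrac{41}{1728}$; the discrepancy $-\tfrac{1}{144}$ is exactly $\tfrac12\bigl(E[Xf^2(X)]-E[Xf(X)]E[f(X)]\bigr)=\tfrac12\bigl(\tfrac19-\tfrac18\bigr)$. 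So the affine relation as written (it is asserted in the paper in the same form) is only valid under the additional condition $\mathrm{Cov}[Xf(X),f(X)]=0$ --- for instance for densities symmetric about the origin, which is the situation of the paper's normal example --- and a correct proof must either add that hypothesis or retain the cross term. The $b=0$ case is of course unaffected.
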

\begin{exmp}\label{egnor}
Let $Z$ have a standard normal distribution, then $VJ^w(Z)=\frac{1}{72}\frac{\sqrt 3}{\pi}$ and $VJ(Z)=\frac{1}{24}\frac{\sqrt 3}{\pi}-\frac{1}{16\pi}$, and thus weighted varextropy of $X=\sigma Z+\mu$ where $-\infty<\mu<\infty, \sigma>0$ is $VJ^w(X)=\frac{1}{72}\frac{\sqrt 3}{\pi}+\frac{\mu^2}{\sigma^2}(\frac{1}{24}\frac{\sqrt 3}{\pi}-\frac{1}{16\pi})$. Furthermore, if $X\sim N(0, \sigma^2)$ then $VJ^w(X)=\frac{1}{72}\frac{\sqrt 3}{\pi}$.
\end{exmp}
\begin{proposition}
If $X$ is a symmetric random variable with respect to a finite mean $\mu=E[X]$,  i.e.,  $F(x+\mu)=1-F(\mu-x)$, then  $VJ^w(X-\mu)=VJ^w(\mu-X)$.
\end{proposition}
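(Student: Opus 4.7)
The plan is to observe that the symmetry hypothesis forces the random variables $X-\mu$ and $\mu-X$ to have identical distributions, after which the equality of their weighted varextropies becomes immediate from the formula \eqref{4Tir} with $\phi(x)=x$.

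First I would differentiate the symmetry condition $F(x+\mu)=1-F(\mu-x)$ to obtain the density-level relation $f(x+\mu)=f(\mu-x)$ for all $x$. Then I would compute the densities of the two shifted variables directly: writing $Y_1=X-\mu$ and $Y_2=\mu-X$, standard change-of-variable gives
\begin{equation*}
f_{Y_1}(y)=f(y+\mu),\qquad f_{Y_2}(y)=f(\mu-y).
\end{equation*}
The symmetry relation above says exactly that these two functions of $y$ coincide.

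Next I would substitute into the definition \eqref{vew} of the weighted varextropy. Because $VJ^w$ depends on the distribution only through the density (both the cubic term $\int y^2 f_{Y}^3(y)\,dy$ and the weighted extropy $J^w(Y)=-\tfrac12\int y f_{Y}^2(y)\,dy$ are integrals against powers of the density), we obtain $VJ^w(Y_1)=VJ^w(Y_2)$ directly, with no change of variables or cancellation needed.

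There is no real obstacle: the only subtlety is being careful to deduce $f(y+\mu)=f(\mu-y)$ from the stated cdf-level symmetry (which requires differentiability, guaranteed by absolute continuity) and to note that $VJ^w$ is a distribution functional in the required sense. I expect the proof to be two or three lines in total.
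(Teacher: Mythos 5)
Your proof is correct: the cdf-level symmetry $F(y+\mu)=1-F(\mu-y)$ already says that $X-\mu$ and $\mu-X$ have the same distribution function (and hence, a.e., the same density $f(y+\mu)=f(\mu-y)$), so equality of their weighted varextropies is immediate since $VJ^w$ is a functional of the density alone. The paper omits the proof of this proposition, and your argument is evidently the intended one; if anything, you could skip the differentiation step entirely by noting $P(X-\mu\leq y)=F(y+\mu)=1-F(\mu-y)=P(\mu-X\leq y)$ directly.
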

\begin{exmp}
Consider  a random variable $X$ with piecewise constant probability density function  
\[f(x)=\sum_{j=1}^na_j1_{[j-1, j)}(x),\]
where $a_j\geq 0, j=1, \ldots, n, \sum_{j=1}^na_j=1$, and $1_{[j-1, j)}(x)$ is the indicator function.
Then, the varextropy and the weighted varextropy of $X$ are  
\begin{align}
VJ(X)&=\frac{1}{4}\left[\int_{0}^n\sum_{j=1}^na_j^31_{[j-1, j)}(x)dx\right]-(J(X))^2\nonumber
\\&=\frac{1}{4}\left[
\sum_{j=1}^n\int_{k-1}^ka_j^3dx
\right]-(J(X))^2=\frac{1}{4}\left[\sum_{j=1}^na_j^3-(\sum_{j=1}^na_j^2)^2\right],
\end{align}
\begin{align}
VJ^w(X)&=\frac{1}{4}\left[\int_0^n x^2\sum_{j=1}^na_j^31_{[j-1, j)}(x)dx\right]-(J^w(X))^2\nonumber\\&=
\frac{1}{4}\left[\sum_{j=1}^n\int_{j-1}^j x^2a_j^3dx\right]-(J^w(X))^2\nonumber\\&=\frac{1}{12}\sum_{j=1}^n(3j^2-3j+1)a_j^3-\left(\frac{1}{4}\sum_{j=1}^n(2j-1)a_j^2\right)^2.
\end{align}
If for every $j=1, \ldots, n$, $a_j=\frac{1}{n}$, then the values of varextropy and weighted varextropy will be $0$ and $\frac{1}{48}$, respectively.
\end{exmp}
\begin{definition}
If $X$ and $Y$ are absolutely continuous random variables, we define the bivariate version of weighted varextropy, as follows
\begin{align}
VJ^w(X, Y)=\frac{1}{16}\left[\int_{-\infty}^{+\infty}\int_{-\infty}^{+\infty}x^2y^2f^3_{X, Y}(x, y)dxdy-{\left(\int_{-\infty}^{+\infty}\int_{-\infty}^{+\infty}xyf^2_{X, Y}(x,y)dxdy\right)}^2\right], 
\end{align}
If $X$ and $Y$ are independent then we will have
\begin{align}
VJ^w(X, Y)=VJ^w(X)\left[VJ^w(Y)+(J^w(Y))^2\right]+\left[(J^w(X))^2\right]VJ^w(Y).
\end{align}
\end{definition}
%%%%%%%%%%%%%%%%%%%%%%%%%%%%%%%%%%%%%%%%%%%%%%%%%%%%%%%%%%%%%%%%%%
\begin{exmp}
Let $X$ and $Y$ have the bivariate exponential distribution with probability density function 
\begin{align}
f_{X, Y}(x,y)=((1+\theta x)(1+\theta y)-\theta)e^{-(x+y+\theta xy)}, \, x>0,\, y>0, \, 0\leq \theta \leq 1,
\end{align}
then we compute the bivariate weighted varextropy as 
\begin{align}
VJ^w(X, Y)=\frac{1}{2916\theta^2}\left(3e^{\frac{3}{\theta}}E_1\left(\frac{3}{\theta}\right)(\theta^2-3\theta-9)+\theta^2+9\theta\right)
-\frac{1}{4096\theta^2}
\left(2e^{\frac{2}{\theta}}E_1\left(\frac{2}{\theta}\right)(\theta+2)-\theta\right)^{2},
\end{align}
where $E_1(\frac{i}{\theta})=\int_{i}^{+\infty}\frac{e^{-\frac{x}{\theta}}}{x}dx$ for $i=2, 3$.
It is necessary to mention, if $\theta=0$ or $X$ and $Y$ are independent and have standard exponential distribution then 
$VJ^w(X, Y)=\frac{295}{2985984}.$
\end{exmp}
We now aim to find a lower bound for the weighted varentropy. To this end, we refer to Theorem 4.1 in \cite{Afen}, where lower bounds for the variance of  a function $g(X)$, are derived for specific distributions including the normal, uniform, and gamma distributions.
In Example \eqref{egnor}, we computed the weighted varextropy for the normal distribution. We now aim to obtain a lower bound for it as well. Additionally, we will obtain a lower bound for the weighted varextropy of the inverse gamma distribution. For this purpose , we compute the derivatives of the function $g(x)=-\frac{1}{2} f(x)$ up to the third order. First, for the normal distribution, since 
\begin{align}
Var[g(X)]\geq \sum_{k=1}^n\frac{(\sigma^2)^k}{k!}E^2\left[g^{(k)}(X)\right],
\end{align}
where $g^{(k)}$ is the $k$th derivative of $g$, thus for $n=3$, we have 
\begin{align}
VJ^w(X)\geq \sum_{k=1}^3\frac{(\sigma^2)^k}{k!}E^2\left[-\frac{1}{2}f^{(k)}(X)\right]
=\frac{11}{512\pi}+\frac{1}{128}\frac{\mu^2}{\sigma^2\pi}.
\end{align}
Note that, for example, if we consider $\mu=3$ and $\sigma^2=2$, the exact value and the lower bound for the weighted varextropy are 
$0.0146$ and $0.0124$, respectively, indicating the high accuracy of this lower bound.

Now if $X$ have inverse gamma distribution with probability distribution function $f(x)=\frac{\beta^{\alpha}}{\gamma(\alpha)}$, then using Theorem 4.3 in \cite{Afen}, we obtain a lower bound for the  variance $g(X)$ as follows
\begin{align}
Var[g(X)]\geq \sum_{k=1}^{n}\frac{\Gamma(\alpha)E^2[X^{2k}g^{(k)}(X)]}{\Gamma(\alpha-2k)\beta^{2k}k!{(\alpha-1)}^k\prod_{j=k-1}^{2k-2}(1-\frac{j}{\alpha-1})}.
\end{align}
Hence for $n=3$, 
\begin{align}
VJ^w(X)\geq \frac{1}{512}\frac{(4\alpha^5+15\alpha^4-961\alpha^3+7575\alpha^2-22317\alpha+21636)(\Gamma(\alpha-\frac{3}{2}))^2}{\pi(\alpha-1)(\Gamma(\alpha))^2}.
\end{align}
Recently the weighted residual varextropy is defined by \cite{Zhang2025}.  Also, they defined the %weighted residual varextropy and 
weighted past varextropy.  
\begin{definition}
Let $X$ be non-negative absolutely continuous  random variable. For $t>0$,
weighted residual varextropy and weighted past varextropy are defined, respectively, 
as follows
\begin{align}\label{wrve1} 
VJ^w_{\phi}(X_t)=\frac{1}{4}\left[\frac{1}{\overline F^3(t)}\int_{t}^{+\infty}\phi^2(x)f^3(x)dx-{\frac{1}{\overline F^4(t)}\left(\int_{t}^{+\infty}\phi(x) f^2(x)dx\right)}^2\right],
\end{align}
and 
\begin{align} 
VJ^w_{\phi}(X_{(t)})=\frac{1}{4}\left[\frac{1}{F^3(t)}\int_{0}^{t}\phi^2(x)f^3(x)dx-{\frac{1}{F^4(t)}\left(\int_{0}^{t}\phi(x) f^2(x)dx\right)}^2\right].
\end{align}
\end{definition} 
%%%%%%%%%%%%%%%%%%%%%%%%%%%%%%%%%%%%%%%%%%%%%%%%%%%%%%
\begin{remark}
If hazard rate function $r(t)$ is increasing, then we can obtain 
\begin{align*}
VJ^w(X_t)+(J^w(X_t))^2&=\frac{1}{4\overline F^3(t)}\int_{t}^{+\infty}x^2{r}^3(x)\overline F^3(x)dx\\&\geq 
\frac{t^2 r^3(t)}{4}\int_{t}^{+\infty}\left(\frac{\overline F(x)}{\overline F(t)}\right)^3dx.
\end{align*}
\end{remark}
\begin{remark}
If we assume the reversed hazard rate function $\tilde{r}(t)$ is decreasing, then
%\begin{align}\label{wvap} 
%VJ^w(X_{(t)})+{(J^w(X_{(t)}))^2}\leq \frac{\tilde{r}^2(t)}{6}\left(\frac{t^2}{2}-\int_{0}^tx\left(\frac{F(x)}{F(t)}\right)^3dx\right)
%\end{align}
%By  utilizing \eqref{wvap} and decreasing reversed hazard rate, 
we have 
\begin{align*}
VJ^w(X_{(t)})+{(J^w(X_{(t)}))^2}&=\frac{1}{4F^3(t)}\int_{0}^{t}x^2\tilde{r}^2(x)F^2(x)f(x)dx\nonumber\\&=\frac{1}{4F^3(t)}\left[\frac{x^2\tilde{r}^2(x)F^3(x)}{3}\Big|_{0}^{t}-\int_{0}^{t}\frac{F^3(x)}{3}\left(2x\tilde{r}^2(x)+2x^2\tilde{r}(x)\tilde{r}^{\prime}(x)\right)dx\right]
\\&\geq
\frac{\tilde{r}^2(t)t^2}{12}-\frac{1}{6}\int_{0}^tx\tilde{r}^2(x)\left(\frac{F(x)}{F(t)}\right)^3dx.
\end{align*}
\end{remark}
We now aim to analyze the behavior of the residual varextropy and the past varextropy. %To this end, we take their derivatives with respect to t.
\begin{definition}
A random variable is said to be increasing (decreasing) in weighted residual varextropy if $VJ^w_{\phi}(X_t)$ is an increasing (decreasing) function of $t$.
\end{definition}
\begin{theorem}
Let $X$ is a non-negative random variable, then for $t>0$, $VJ^w_{\phi}(X_{t})$ is incresing  (decreasing) if and only if 
\begin{align}
VJ^w_{\phi}(X_{t})\geq (\leq) \frac{1}{12}\phi^2(t)(r(t))^2+\frac{1}{3}r(t)J^w_{\phi}(X_{t})\left[J^w_{\phi}(X_{t})+\phi(t)r(t)\right].
\end{align}
\end{theorem}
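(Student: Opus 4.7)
The plan is to differentiate $VJ^w_\phi(X_t)$ with respect to $t$, rewrite the result purely in terms of $r(t)$, $\phi(t)$, $J^w_\phi(X_t)$ and $VJ^w_\phi(X_t)$, and then read off the monotonicity characterization from the sign of the derivative.

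First, I would introduce the shorthands $A(t)=\int_t^{+\infty}\phi^2(x)f^3(x)\,dx$ and $B(t)=\int_t^{+\infty}\phi(x)f^2(x)\,dx$, so that
\begin{align*}
VJ^w_\phi(X_t)=\frac{A(t)}{4\overline F^3(t)}-\frac{B^2(t)}{4\overline F^4(t)},\qquad J^w_\phi(X_t)=-\frac{B(t)}{2\overline F^2(t)}.
\end{align*}
These encode the algebraic dictionary $B(t)/\overline F^2(t)=-2J^w_\phi(X_t)$, $B^2(t)/\overline F^4(t)=4(J^w_\phi(X_t))^2$ and $A(t)/\overline F^3(t)=4VJ^w_\phi(X_t)+4(J^w_\phi(X_t))^2$, which I would use to translate the derivative back into the language of the theorem.

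Second, the Leibniz rule gives $A'(t)=-\phi^2(t)f^3(t)$, $B'(t)=-\phi(t)f^2(t)$, and $\overline F'(t)=-f(t)$. Applying the quotient rule and grouping every $f(t)/\overline F(t)$ into a factor of $r(t)$ yields
\begin{align*}
\frac{d}{dt}\frac{A(t)}{\overline F^3(t)}&=-\phi^2(t)r^3(t)+3r(t)\frac{A(t)}{\overline F^3(t)},\\
\frac{d}{dt}\frac{B^2(t)}{\overline F^4(t)}&=-2\phi(t)r^2(t)\frac{B(t)}{\overline F^2(t)}+4r(t)\frac{B^2(t)}{\overline F^4(t)}.
\end{align*}

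Third, substituting the dictionary of the first step and taking the difference expresses $(d/dt)VJ^w_\phi(X_t)$ as a linear combination of $r(t)VJ^w_\phi(X_t)$, $\phi^2(t)r^3(t)$, $r(t)(J^w_\phi(X_t))^2$, and $\phi(t)r^2(t)J^w_\phi(X_t)$. Isolating the $VJ^w_\phi(X_t)$ term on one side and dividing by its positive coefficient (a multiple of $r(t)$, assumed positive) converts the condition $(d/dt)VJ^w_\phi(X_t)\geq 0$ into precisely the stated inequality, and likewise for ``$\leq$''. Both directions of the ``iff'' are then immediate from the equivalence between local monotonicity of $VJ^w_\phi(X_t)$ and the sign of its derivative. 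The only real obstacle is careful bookkeeping of the exponents of $\overline F(t)$ and the $r(t)$ factors during substitution; no analytical subtlety arises beyond the standard smoothness and integrability conditions that make Leibniz's rule and the quotient rule applicable to $A(t)$, $B(t)$ and $\overline F(t)$.
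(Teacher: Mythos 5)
Your proposal is correct and follows essentially the same route as the paper: differentiate the residual weighted varextropy, reduce the derivative to $-\frac{1}{4}\phi^2(t)r^3(t)+3r(t)VJ^w_\phi(X_t)-r(t)J^w_\phi(X_t)\left[J^w_\phi(X_t)+\phi(t)r(t)\right]$, and divide by $3r(t)>0$. (One minor remark: after dividing by $3r(t)$ the last term should read $\frac{1}{3}J^w_\phi(X_t)\left[J^w_\phi(X_t)+\phi(t)r(t)\right]$ without the extra factor $r(t)$, so the displayed inequality in the statement carries a typo that both your write-up and the paper's own proof inherit.)
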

\begin{proof}
Differentiating \eqref{wrve1} with respect to $t$ leads to 
\begin{align}
\frac{d}{dt}VJ^{w}_{\phi}(X_{t})&=\frac{1}{4}\left\{-\phi^2(t)\left(\frac{f(t)}{\overline F(t)}\right)^3+\frac{3f(t)}{\overline F(t)}\int_{t}^{+\infty}\phi^2(x)\left(\frac{f(x)}{\overline F(t)}\right)^3dx\right.\nonumber
\\&+2\phi(t)\left(\frac{f(t)}{\overline F(t)}\right)^2\int_{t}^{+\infty}\phi(x)\left(\frac{f(x)}{\overline F(t)}\right)^2dx\nonumber
\left.-4\frac{f(t)}{\overline F(t)}\left(\int_{t}^{+\infty}\phi(x)\left(\frac{f(x)}{\overline F(t)}\right)^2dx\right)^2\right\}\nonumber
\\&=-\frac{1}{4}{\phi}^2(t)(r(t))^3+3r(t)VJ^w_{\phi}(X_{t})-r(t)J^w_{\phi}(X_{t})\left[J^w_{\phi}(X_{t})+\phi(t)r(t)\right].
\end{align}
Then $VJ^{w}_{\phi}(X_{t})$ is increasing (decreasing) if and only if 
\[-\frac{1}{4}{\phi}^2(t)(r(t))^3+3r(t)VJ^w_{\phi}(X_{t})-r(t)J^w_{\phi}(X_{t})\left[J^w_{\phi}(X_{t})+\phi(t)r(t)\right]\geq(\leq) 0,\]
but $r(t)\geq 0$, so the desired result is obtaind.
\end{proof}
\begin{remark}
Similarly, if $VJ^w_{\phi}(X_{(t)})$ is an increasing (decreasing) function of $t$ then we can obtain bounds for it as follows  
\begin{align}
VJ^w_{\phi}(X_{(t)})\leq (\geq) \frac{1}{12}\phi^2(t)(\tilde r(t))^2+\frac{1}{3}\tilde r{(t)}J^w_{\phi}(X_{(t)})\left[J^w_{\phi}(X_{(t)})+\phi(t)r(t)\right].
\end{align}
\end{remark}
Now, in the following theorem,  we obtain upper bound for the weighted residual varextropy $VJ^w_{\phi}(X_t)$, using mean residual life $m(t)$, given as
\[m(t)=E(X_t)=E(X-t|X>t).\]
\begin{theorem}
Let $X$ be a non-negative random variable with density function $f (x)$ and survival function $\overline F(x) = 1 - F(x)$. Then an upper bound for $VJ^w_{\phi}(X_t)$ is given by
\begin{align}
VJ^w_{\phi}(X_t)\leq \frac{1}{(\overline F(t))^2}E\left\{\Big[\phi(X_{t}+t)f^{\prime}(X_{t}+t)+\phi^{\prime}(X_t+t)f(X_{t}+t)\Big]^2\,\frac{m(t+X_{t})-m(t)+X_{(t)}}{r(t+X_{(t)})}\right\}.
\end{align}
\end{theorem}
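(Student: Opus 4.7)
The plan is to recast the weighted residual varextropy as a conditional variance and then apply a Cacoullos--Papathanasiou type upper bound for the variance in terms of the derivative.

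First, setting $h(x)=\phi(x)f(x)$, the definition \eqref{wrve1} can be rewritten as
\[
4\,\overline F^{2}(t)\,VJ^{w}_{\phi}(X_{t})\;=\;\mathrm{Var}\!\left[h(X)\mid X>t\right],
\]
because, conditional on $\{X>t\}$, the variable $X$ has density $f(x)/\overline F(t)$ on $(t,\infty)$. This reduces the task to bounding a variance of an absolutely continuous function under the residual distribution of $X$ given $X>t$.

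Next, I would invoke the Cacoullos--Papathanasiou variance inequality: for any absolutely continuous random variable $Y$ with density $g$ and mean $\mu_{Y}$, and any absolutely continuous $h$ with finite variance,
\[
\mathrm{Var}[h(Y)]\;\leq\; E\!\left[(h'(Y))^{2}\,\psi(Y)\right],\qquad \psi(y)=\frac{1}{g(y)}\int_{y}^{\infty}(u-\mu_{Y})\,g(u)\,du.
\]
I would then apply this to the conditional distribution, whose density is $g(y)=f(y)/\overline F(t)$ on $(t,\infty)$ and whose mean is $\mu_{Y}=t+m(t)$. A short integration by parts, using $\int_{y}^{\infty}u\,f(u)\,du=y\,\overline F(y)+\overline F(y)\,m(y)$ together with $m(y)\,\overline F(y)=\int_{y}^{\infty}\overline F(u)\,du$, yields
\[
\psi(y)\;=\;\frac{\overline F(y)}{f(y)}\bigl[(y-t)+m(y)-m(t)\bigr]\;=\;\frac{(y-t)+m(y)-m(t)}{r(y)}.
\]

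Finally, the change of variable $Y=X_{t}+t$ (so that $y-t=X_{t}$ and $r(y)=r(X_{t}+t)$), together with $h'(x)=\phi'(x)f(x)+\phi(x)f'(x)$, transforms the Cacoullos--Papathanasiou bound into the claimed expression. The main obstacle is the second step: correctly identifying the weight $\psi$ for the truncated distribution. The observation that makes this step go through is that the mean residual life of $Y=X\mid X>t$ evaluated at any $y>t$ coincides with the original $m(y)$, since $E[X-y\mid X>y,\,X>t]=E[X-y\mid X>y]$ for $y>t$; once this is in hand, the computation of $\psi$ is a direct application of integration by parts, and the rest of the argument is mechanical substitution.
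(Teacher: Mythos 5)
Your proof is correct and follows essentially the same route as the paper: both express $VJ^{w}_{\phi}(X_t)$ as a (scaled) variance under the residual/truncated distribution and apply the Cacoullos--Papathanasiou upper variance bound, with the weight function $\psi$ identified via the mean residual life exactly as in Proposition~1 and Lemma~1 of the cited Goodarzi et al.\ reference; your parametrization by $X\mid X>t$ rather than $X_t=X-t\mid X>t$ is an immaterial shift. In fact, carrying the factor $\tfrac14$ from $VJ^{w}_{\phi}(X_t)=\tfrac14\,\mathrm{Var}[h(X)\mid X>t]/\overline F^{2}(t)$ through to the end, your argument yields the sharper constant $\tfrac{1}{4\overline F^{2}(t)}$, which implies (and improves) the stated bound.
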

\begin{proof}
In Proposition 1 of  \cite{Goodd}, we can show 
\begin{align}
\int_{x}^{+\infty}(y-m(t))\frac{f(t+y)}{\overline F(t)}dy=\frac{\overline F(t+x)}{\overline F(t)}\left\{m(t+x)-m(t)+x\right\},
\end{align}
are therefore by Lemma 1 in \cite{Goodd}, we have
\begin{align}
VJ&^w_{\phi}(X_{(t)})\leq \int_{0}^{+\infty}\left[\phi(y+t)\frac{f^{\prime}(y+t)}{\overline F(t)}+\phi^{\prime}(y+t)\frac{f(y+t)}{\overline F(t)}\right]^2\frac{\overline F(t+x)}{\overline F(t)}\left\{m(t+x)-m(t)+x\right\}dx\nonumber
\\&=\int_{0}^{+\infty}\left[\phi(y+t)\frac{f^{\prime}(y+t)}{\overline F(t)}+\phi^{\prime}(y+t)\frac{f(y+t)}{\overline F(t)}\right]^2\frac{1}{r(t+x)}\left\{m(t+x)-m(t)+x\right\}\frac{f(x+t)}{\overline F(t)}dx.
\end{align}
\end{proof}
%%%%%%%%%%%%%%%%%%%%%%%%%%%%%%%%%%%%%%%%%%%%%%%%%%%%%%%%%%%%%%%%%%
In the following, we  provides a useful expression for the varextropy of the lifetime of coherent systems. Coherent systems offer a mathematical model for complex technical devices made of simple components. Notably, a structure consisting of $n$ components is known as a coherent system if it has no irrelevant components (a component is irrelevant if it does notmatter whether or not it is working)
and the system is monotone in every component.  

Let $X_1$, $\ldots$, $X_n$ be independent and identically distributed random variables with distribution function $F(t)$ and probability density function $f(t)$. They represent the lifetimes of the components of an $n$-component coherent system with lifetime $T$, and signature vector ${\bf s}=(s_1, s_2, \ldots, s_n)$, where $s_i=P(T=X_{i: n})$ for $i=1, \ldots, n$ is 
the probability that the $i$th component in the system is the last failed component and $\sum_{i=1}^ns_i=1$.
\cite{Saman} showed that the probability density function of $T$
%the lifetime of the coherent system 
is $f_T(t)=\sum_{i=1}^ns_if_{i: n}(t)$ where
\[f_{i: n}(t)=\frac{\Gamma(n+1)}{\Gamma(i)\Gamma(n-i+1)}[F(t)]^{i-1}[\overline F(t)]^{n-i}f(t), \, \, t\geq 0.\] Hence, 
the varextroy of the system with lifetime $T$ can be expressed as follows
\begin{align}
VJ(T)&=\frac{1}{4}\left[\int_{0}^{+\infty}(f_T(x))^3dx-\left(\int_{0}^{+\infty}(f_T(x))^2dx\right)^2\right]\nonumber\\&=\frac{1}{4}\left[\int_{0}^{+\infty}\left(\sum_{i=1}^ns_if_{i:n}(x)\right)^3dx-\left(\int_{0}^{+\infty}\left(\sum_{i=1}^ns_if_{i:n}(x)\right)^2dx\right)^2\right]\nonumber
\\&=\frac{1}{4}\left[\int_{0}^{1}\left(\sum_{i=1}^n s_ig_{i}(u)\right)^3f^2(F^{-1}(u))du-\left(\int_{0}^{1}\left(\sum_{i=1}^n s_ig_{i}(u)\right)^2f(F^{-1}(u))du\right)^2\right]\nonumber
\\&=\frac{1}{4}\left[\int_{0}^{1}g_V^3(u)f^2(F^{-1}(u))du-\left(\int_{0}^{1}g_V^2(u)f(F^{-1}(u))du\right)^2\right],
\end{align}
where $g_V(u)=\sum_{i=1}^n s_ig_{i}(u)$.
%\end{thm}
\begin{exmp}
Consider a coherent system of order 3 with lifetime components identically independent distributed, which have common exponential distribution with mean $\frac{1}{\lambda}$.  We consider  signature vector ${\bf s}=(\frac{1}{3}, \frac{2}{3}, 0)$ for the cohorent sysytem. It can be easily shown that $g_V(u)=(1-u)(1+3u)$, hence
\begin{align*}
J(T)=-\frac{\lambda}{2}\int_{0}^1{(1-u)}^3{(1+3u)}^2du=-0.35\lambda.
\end{align*} 
On the other hand, since 
$\frac{1}{4}\int_{0}^{+\infty}f^3_T(x)dx=\frac{\lambda^2}{4}\int_{0}^1(1-u)^5(1+3u)^3du=\frac{25}{168}\lambda^2$, thus
$VJ(T)=0.0263\lambda^2$.
\end{exmp}
When the structure of the engineering system is highly complex and contains a large number of components, calculating  $J(T)$ and consequently $VJ(T)$ becomes difficult or very time-consuming. In the following, we derive an upper bound on the %find the upper limit for the 
lifetime of the coherent system, which can be useful for investigating the uncertainty behavior of its lifetime. 
%of this system.
\begin{theorem}\label{harth}
Let $T$ is lifetime of a cohorent system, consisting of $n$  independent and identically distributed components with  lifetimes  $X_1$, $\ldots$,  $X_n$  and a common distribution $F$ with probability density function $f$. Also, suppose that  signature of  the system is ${\bf s} = (s_1, ..., s_n)$. In this case, we have the following statement:  
\begin{align}
J(T)\leq \frac{-1}{8}\int_{0}^{1}\frac{1}{(F^{-1}(u))^2}\left(\sum_{i=1}^{n}s_iG_i(u)\right)^2dF^{-1}(u),
\end{align}
where $G_i(u)=\sum_{j=i}^n{n\choose j}u^j(1-u)^{n-j}$.
\end{theorem}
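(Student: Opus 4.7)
The overall plan is to recognize that the bound looks like Hardy's inequality in disguise once the inner sum is identified with $F_T$. The name of the theorem (\texttt{harth}) essentially confirms this, and the statement of Hardy has been recorded as \eqref{T1.1}. So my strategy is: (i) convert $J(T)$ back to the integral $-\tfrac12\int_0^\infty f_T^2(x)\,dx$; (ii) apply Hardy's inequality with $p=2$ to $f_T$; (iii) rewrite the resulting integral in the $u$-variable via $u=F(x)$ so that the cumulative $F_T$ is expressed in the signature form $\sum_i s_iG_i(u)$.

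In more detail, first I would write $J(T)=-\tfrac12\int_0^\infty f_T^2(x)\,dx$. Using the signature representation $f_T(t)=\sum_{i=1}^n s_i f_{i:n}(t)$, one integrates and obtains $F_T(t)=\int_0^t f_T(y)\,dy=\sum_{i=1}^n s_i F_{i:n}(t)$. The key combinatorial identity is $F_{i:n}(t)=P(\mathrm{Bin}(n,F(t))\ge i)=\sum_{j=i}^n \binom{n}{j}F(t)^{j}(1-F(t))^{n-j}$, so after the change of variable $u=F(t)$ one has
\begin{equation*}
F_T(F^{-1}(u))=\sum_{i=1}^n s_i G_i(u).
\end{equation*}

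Next, since $T$ is non-negative, $f_T\in L^2(0,\infty)$, so Hardy's inequality \eqref{T1.1} with $p=2$ applies to $f_T$, giving
\begin{equation*}
\int_0^\infty \left(\frac{1}{x}\int_0^x f_T(y)\,dy\right)^{\!2}dx=\int_0^\infty \frac{F_T^2(x)}{x^2}\,dx \;\le\; 4\int_0^\infty f_T^2(x)\,dx \;=\; -8J(T).
\end{equation*}
Rearranging yields $J(T)\le -\tfrac18\int_0^\infty F_T^2(x)/x^2\,dx$. Finally, performing the change of variables $x=F^{-1}(u)$ (so that $dx=dF^{-1}(u)$) and inserting the signature expression for $F_T(F^{-1}(u))$ produces
\begin{equation*}
J(T)\le -\frac{1}{8}\int_0^1 \frac{1}{(F^{-1}(u))^2}\left(\sum_{i=1}^n s_i G_i(u)\right)^{\!2}dF^{-1}(u),
\end{equation*}
which is the claimed inequality.

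The only non-trivial step is the identification $F_T(F^{-1}(u))=\sum_i s_i G_i(u)$, which is purely algebraic once one recalls the order-statistic cdf in binomial form; the analytic content is a direct invocation of Hardy's inequality with $p=2$, so there is no real obstacle. One mild technical point worth stating explicitly is that the inequality is vacuous/trivial if $\int_0^\infty f_T^2(x)\,dx=\infty$, so the assumption $f\in L^2$ (equivalently, $J(X)$ finite) should be implicit in order to have a meaningful bound.
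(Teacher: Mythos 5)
Your proposal is correct and follows essentially the same route as the paper's own proof: apply Hardy's inequality \eqref{T1.1} with $p=2$ to $f_T$, identify $\int_0^x f_{i:n}(t)\,dt$ with the binomial tail sum, and change variables $u=F(x)$ to arrive at the stated bound. Your explicit remark on the finiteness of $\int_0^\infty f_T^2$ is a reasonable extra precaution that the paper leaves implicit.
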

\begin{proof}
By using of Hardy's inequality  \eqref{T1.1} and equation \eqref{ex} for non-negative random variable, we have 
\begin{align*}
\int_{0}^{+\infty}f_T^2(x)dx&\geq \frac{1}{4}\int_{0}^{+\infty}{\left(\frac{1}{x}\int_{0}^{x}f_T(t)dt\right)}^2dx
= \frac{1}{4}\int_{0}^{+\infty}\frac{1}{x^2}\left(\int_{0}^x\sum_{i=1}^ns_if_{i: n}(t)dt\right)^2dx
\\&=\frac{1}{4}\int_{0}^{+\infty}\frac{1}{x^2}\left(\sum_{i=1}^ns_i\int_{0}^xf_{i: n}(t)dt\right)^2dx\\&=
\frac{1}{4}\int_{0}^{+\infty}\frac{1}{x^2}\left(\sum_{i=1}^ns_i\sum_{j=i}^n{n\choose j}F^j(x){(1-F(x))}^{n-j}\right)^2dx
\\&=\frac{1}{4}\int_0^1\frac{1}{{(F^{-1}(u))}^2}\left(\sum_{i=1}^ns_i\sum_{j=i}^n{n\choose j}u^j{(1-u)}^{n-j}\right)^2dF^{-1}(u),
\end{align*}
and thus, by utilizing of definition $G_i(u)$, the proof is completed. 
\end{proof}
%%%%%%%%%%%%%%%%%%%%%%%%%%%%%%%%%%%%%%%%%%%%%%%%%%%%%%%%%%%%%%%
\begin{theorem}
Under the conditions of Theorem \ref{harth}, we have
\begin{align}
VJ(T)&\leq \frac{2}{27}\int_{0}^{1}\frac{1}{(F^{-1}(u))^3}\Big(\sum_{i=1}^{n}s_iG_i(u)\Big)^3dF^{-1}(u)\nonumber\\&-{\left(\frac{1}{8}\int_{0}^{1}\frac{1}{(F^{-1}(u))^2}\Big(\sum_{i=1}^{n}s_iG_i(u)\Big)^2dF^{-1}(u)\right)}^2.
\end{align}
\end{theorem}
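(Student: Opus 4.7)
The plan is to mirror the strategy behind Theorem~\ref{harth}: first split $VJ(T)$ into its two natural pieces, then control each by applying Hardy's inequality \eqref{T1.1} at the appropriate exponent, and finally perform the same quantile change of variable $u=F(x)$, $dx=dF^{-1}(u)$ used there. Starting from the definition \eqref{eqve} and the identity $J(T) = -\tfrac{1}{2}\int_0^{+\infty} f_T^2(x)\,dx$, I would write
\[
VJ(T) \;=\; \frac{1}{4}\int_{0}^{+\infty} f_T^3(x)\,dx \;-\; J^2(T),
\]
so that the argument reduces to estimating the cubic density integral and squaring the bound on $J(T)$ already obtained in Theorem~\ref{harth}.

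For the cubic term, I would apply Hardy's inequality to the non-negative density $f_T$ with $p=3$; the constant $(p/(p-1))^{-p}=8/27$ yields the prefactor $\tfrac{1}{4}\cdot\tfrac{8}{27}=\tfrac{2}{27}$ that appears in the statement. Using Samaniego's mixture $f_T=\sum_{i=1}^n s_i f_{i:n}$ together with
\[
\int_{0}^{x} f_{i:n}(t)\,dt \;=\; \sum_{j=i}^{n}\binom{n}{j} F^{j}(x)\bigl(1-F(x)\bigr)^{n-j} \;=\; G_i\bigl(F(x)\bigr),
\]
the cumulative term inside the Hardy functional becomes $\sum_{i=1}^{n} s_i\,G_i(F(x))$, and the substitution $u=F(x)$, $dx=dF^{-1}(u)$ turns the $x$-integral into the quantile integral $\int_0^1 (F^{-1}(u))^{-3}\bigl(\sum_{i=1}^n s_i G_i(u)\bigr)^3 dF^{-1}(u)$ written in the claim. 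For the term $J^2(T)$, Theorem~\ref{harth} already provides an estimate whose two sides are both non-positive, so squaring it produces a lower bound on $J^2(T)$ equal precisely to $\bigl(\tfrac{1}{8}\int_0^1 (F^{-1}(u))^{-2}\bigl(\sum_{i=1}^n s_i G_i(u)\bigr)^2 dF^{-1}(u)\bigr)^2$, which enters with a minus sign on the right-hand side.

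The delicate step is the bookkeeping of inequality directions. Hardy at $p=3$ delivers a lower bound on $\tfrac14\int f_T^3$, while the squared version of Theorem~\ref{harth} delivers a lower bound on $J^2(T)$; these two estimates have to be recombined inside $VJ(T)=\tfrac14\int f_T^3 - J^2(T)$ so that the net inequality runs in the direction $\leq$ asserted in the statement, and carefully checking this compatibility (together with verifying that $\sum_{i=1}^n s_i G_i$ and $F^{-1}$ are regular enough for the Hardy hypothesis and the Stieltjes change of variable to apply) is the only non-routine ingredient. All remaining steps — the quantile substitution, the expansion of $F_T$ through the signature representation, and the final algebraic simplification in terms of $G_i(u)$ and $F^{-1}(u)$ — are parallel to those in the proof of Theorem~\ref{harth}.
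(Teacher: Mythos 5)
Your strategy is the same as the paper's (the paper's own proof is only the one--line remark that the argument is ``similar to the previous theorem and using Hardy's inequality''), and your identification of the ingredients is accurate: the decomposition $VJ(T)=\tfrac14\int_0^{+\infty} f_T^3(x)\,dx - J^2(T)$, Hardy with $p=3$ and constant $(3/2)^3=27/8$ producing the prefactor $\tfrac14\cdot\tfrac{8}{27}=\tfrac{2}{27}$, the signature identity $\int_0^x f_{i:n}(t)\,dt=G_i(F(x))$, and the quantile substitution $u=F(x)$. However, the step you defer as ``the only non-routine ingredient'' --- checking that the two Hardy estimates recombine to give the direction $\leq$ --- is precisely where the argument fails, and it cannot be repaired in the form you describe. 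Inequality \eqref{T1.1} bounds the Hardy functional \emph{above} by $(p/(p-1))^p\int f_T^p$, so it yields the \emph{lower} bound
\begin{equation*}
\frac14\int_0^{+\infty} f_T^3(x)\,dx \;\geq\; \frac{2}{27}\int_0^1 \frac{1}{(F^{-1}(u))^3}\Bigl(\sum_{i=1}^n s_i G_i(u)\Bigr)^3\, dF^{-1}(u)=:\frac{2}{27}H_3 .
\end{equation*}
In Theorem \ref{harth} the analogous lower bound on $\int f_T^2$ turns into an upper bound on $J(T)$ only because of the factor $-\tfrac12$. In $VJ(T)$ the cubic term carries a \emph{positive} sign, so the Hardy estimate pushes that term the wrong way. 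Only the second term cooperates: writing $H_2$ for the corresponding quadratic quantile integral, $J(T)\leq -\tfrac18 H_2\leq 0$ gives $J^2(T)\geq \bigl(\tfrac18 H_2\bigr)^2$ and hence $-J^2(T)\leq -\bigl(\tfrac18 H_2\bigr)^2$.

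Consequently the two estimates your method produces are $VJ(T)\geq \tfrac{2}{27}H_3 - J^2(T)$ and $VJ(T)\leq \tfrac14\int f_T^3 - \bigl(\tfrac18 H_2\bigr)^2$, and neither implies the asserted bound $VJ(T)\leq \tfrac{2}{27}H_3-\bigl(\tfrac18 H_2\bigr)^2$: that would require $\tfrac14\int f_T^3-\tfrac{2}{27}H_3 \leq J^2(T)-\bigl(\tfrac18 H_2\bigr)^2$, i.e.\ that the slack in the cubic Hardy inequality is dominated by the slack in the quadratic one, which does not follow from Hardy and has no reason to hold in general. So your proposal does not establish the theorem as stated --- and, to be fair, the paper's own sketch suffers from exactly the same defect. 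A correct conclusion of this method would be a \emph{lower} bound on $\tfrac14\int f_T^3$ together with an upper bound on $-J^2(T)$ stated separately, or an upper bound for $VJ(T)$ obtained by bounding $\int f_T^3$ from above by some genuinely different device.
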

\begin{proof}
 The proof is similar to the previous theorem  and using Hardy's inequality.
\end{proof}
\begin{exmp}
Let $T$ denote the lifetime of a coherent system with the signature $\bold{s}=(0, \frac{1}{6}, \frac{7}{12}, \frac{1}{4})$, consisting of $n=4$ identically and independently distributed components. If the lifetimes of the components follow a common distribution $f(x)=\frac{\beta}{2} {(\frac{x}{2})}^{\beta-1},\, 0<x<2,$ and considering  that, $F^{-1}(x)=2x^{\frac{1}{\beta}}$ and $f(F^{-1}(x))=\frac{\beta}{2}x^{1-\frac{1}{\beta}}$, the upper bounds for extropy and varextropy are, respectively, equal to \[UJ=\frac{1}{16}\frac{-2168\beta^4+1316\beta^3-291\beta^2+28\beta-1}{6720\beta^5-5944\beta^4+2070\beta^3-355\beta^2+30\beta-1}\] and 
\[\frac{1}{108}\frac{5715\beta^4-4548\beta^3+1318\beta^2-168\beta+8}{12474\beta^5-14841\beta^4+6939\beta^3-1594\beta^2+180\beta-8}-(UJ)^2.\]
For example, for $\beta=2$, the upper bounds are equal to $-0.01169$ and $0.002494$. 
\end{exmp}
\begin{exmp}
Let $T$ represent  the lifetime of a coherent system with the signature given in the previous example. If the lifetimes of the components has a common exponential distribution with a mean of $\frac{1}{2}$, then $VJ(T^X) = 0.03659$. Additionally, if the lifetimes have a log-logistic distribution with pdf 
$g_1(x)=\frac{8x}{4x^2+1}, \, x>0$ 
then $VJ(T^{Y1})=0.04252$. Since $f(F^{-1}(x))=2(1-x)$  and $g_1(G_1^{-1}(x))=4\sqrt{x{(1-x)}^3}$, it is evident, by drawing a figure,  that
$X\leq_{dips}Y_1$. On the other hand, it has been shown that $T^X\leq_{VJ}T^{Y_1}$. 
Now, if the lifetimes of components follow a type-II pareto distribution with
$g_2(x)=\frac{5}{3}{(1+\frac{x}{3})}^{-6}, x>0$, then $g_2(G_2^{-1}(x))=\frac{5}{3}{(1-u)}^{\frac{6}{5}}$. It can be similarly demonstrated that $X\leq_{dips}Y_2$, whereas $VJ(T^{Y_2})=0.02215$, and thus $T^X\geq_{VJ}T^{Y_2}$. 
\end{exmp}
\begin{remark}
\cite{Vaselabadi} stated that if $X$ and $Y$ are two random variables such that $X\leq_{disp}Y$, then $X\geq_{VJ}Y$.
However, the above example provides a counterexample that challenges the validity of this proposition. 
\end{remark}
%%%%%%%%%%%%%%%%%%%%%%%%%%%%%%%%%%%%%%%%%%%%%%%%%%%%%%%%%%%%%%%%%%%%%
%%%%%%%%%%%%%%%%%%%%%%%%%%%%%%%%%%%%%%%%%%%%%%%%%%%%%%%%%%%%%%%%%%%%%
\section{Stochastic comparisons }
In this section, we propose a new stochastic order derived from the weighted varextropy measure.
\begin{definition}
The random variable $X$ is said to be smaller than $Y$ in the weighted varextropy order, denoted $X\overset{VJ^w}{\leq}Y$, if $VJ^w(X)\leq VJ^w(Y)$.
\end{definition}
In the following example, we get some comparisons about the weighted varextropy.
\begin{exmp}
\begin{enumerate}
\item[(a)] If $X$ has given Laplace distribution in Example \ref{eglap} with $\beta=1$ and $Y$ has standard exponential distribution then $VJ^W(x)=\frac{1}{216}$ and  $VJ^w(Y)=\frac{5}{1728}$ and thus $Y\overset{VJ^w}{\leq}X$.
\item[(b)] If $X$ and $Y$ follow Weibull distribution with $\lambda=1$ and $\alpha=2$ and $\lambda=1$ and $\alpha=1$, respectively. Then $VJ^w(X)=0.058$ and $VJ^w(Y)=0.0029$,  then $Y\overset{VJ^w}{\leq}X$.
\item[(c)] If $X\sim N(\mu_1, \sigma^2)$ and $Y\sim N(\mu_2, \sigma^2)$  and $\mu_1\leq \mu_2$ then using Example \ref{egnor} $X\overset{VJ^w}{\leq}Y$.  Also if $X\sim N(\mu, \sigma_1^2)$ and $Y\sim N(\mu, \sigma_2^2)$ and $\sigma_1\leq \sigma_2$ then $Y\overset{VJ^w}{\leq}X$.
\end{enumerate}
\end{exmp}
\begin{remark}
According to the above example and the results obtained from Example 6 in \cite{Vaselabadi}, it can be concluded that the varextropy order does not imply the weighted varextropy order.
\end{remark}
\begin{proposition}\label{p2.22}
The random variable $X$ has reciprocal distribution with probability distribution function 
\begin{align}\label{recip}
f(x)=\frac{1}{x[\ln(b)-\ln(a)]},\, a<x<b, \, \, a>0, 
\end{align}
if and only if $VJ^w(X)=0$.
\end{proposition}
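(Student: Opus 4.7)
The plan is to recognise that the weighted varextropy can be rewritten as a plain variance, and then interpret $VJ^w(X)=0$ as a degeneracy condition that forces a precise functional form for $f$.

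First I would observe that for a non-negative $X$ with density $f$,
\begin{align*}
VJ^w(X) &= \frac{1}{4}\int x^{2} f^{3}(x)\,dx - \bigl(J^w(X)\bigr)^{2}\\
&= \frac{1}{4}\!\left[\int (xf(x))^{2} f(x)\,dx - \left(\int xf(x)\cdot f(x)\,dx\right)^{2}\right]\\
&= \frac{1}{4}\operatorname{Var}\bigl[X f(X)\bigr],
\end{align*}
so $VJ^w(X)\ge 0$ with equality iff the random variable $X f(X)$ is almost surely constant. This rewriting is the key step, and the rest of the proof is essentially a normalisation argument.

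For the ``if'' direction, suppose $f$ is given by \eqref{recip}. Then on the support $(a,b)$,
\[
x f(x) = \frac{1}{\ln b-\ln a},
\]
which is a constant, so $\operatorname{Var}[Xf(X)]=0$ and hence $VJ^w(X)=0$. For the converse, assume $VJ^w(X)=0$. By the variance representation above, there exists a constant $c$ such that $xf(x)=c$ almost everywhere on the support $S$ of $X$. Since $f\ge 0$ and $f$ cannot vanish on $S$, $c\ne 0$ and $x$ has constant sign on $S$; under the (implicit) non-negativity assumption $S\subset(0,\infty)$ and $c>0$. Writing $S=(a,b)$ with $0<a<b$, the normalisation $\int_a^b f(x)\,dx=1$ gives
\[
1=\int_{a}^{b}\frac{c}{x}\,dx = c\bigl(\ln b-\ln a\bigr),
\]
so $c=1/(\ln b-\ln a)$, and hence $f$ coincides with \eqref{recip}.

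The main obstacle—really the only non-routine point—is justifying that the support of $X$ is an interval of the form $(a,b)\subset(0,\infty)$ rather than a more general set; this is handled by the sign argument $xf(x)=c>0$ together with the convention that a reciprocal density is supported on an interval. Once the variance identity $VJ^w(X)=\tfrac14\operatorname{Var}[Xf(X)]$ is in hand, both implications reduce to elementary observations about when a random variable is almost surely constant.
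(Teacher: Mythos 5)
Your proof is correct and follows essentially the same route as the paper's: both identify $VJ^w(X)=\tfrac{1}{4}Var[Xf(X)]$, deduce $xf(x)=c$ almost surely from the vanishing variance, and then normalise on the assumed finite support $(a,b)$ to recover the reciprocal density. You merely spell out the sufficiency direction and the normalisation integral more explicitly than the paper, which labels them straightforward.
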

\begin{proof}
The sufficiency part of the proof is straightforward. To prove of  necessity, we assume that $VJ^w(X)=0$, hence 
$xf(x)=E(Xf(X))=constant=c$ and therefore $f(x)=\frac{c}{x}$. Now, if we assume that $X$ has finite support $(a, b)$, then we conclude that $X$ has a reciprocal distribution with the probability density function given in \eqref{recip}. 
\end{proof}
\begin{corollary}
If $X$ has a reciprocal distribution, then for any continuous random variable $Y$,  we have $X\overset{VJ^w}{\leq}Y$, 
\end{corollary}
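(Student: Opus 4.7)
The plan is to combine Proposition \ref{p2.22} with the fact that $VJ^w$ is, by construction, a variance and hence non-negative. So the whole argument reduces to two lines once we observe these facts.

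First, I would invoke Proposition \ref{p2.22}: since $X$ has the reciprocal distribution with density \eqref{recip}, we have $VJ^w(X)=0$. (The sufficiency direction, which is what we need, is the easy half noted in the proof of that proposition.) Second, I would rewrite the weighted varextropy of $Y$ as a variance. Starting from \eqref{vew}, note that
\begin{align*}
VJ^w(Y)
&=\frac{1}{4}\int_{-\infty}^{+\infty}x^{2}f^{3}(x)\,dx-\bigl(J^w(Y)\bigr)^{2}\\
&=E\!\left[\Bigl(-\tfrac{1}{2}Yf(Y)\Bigr)^{\!2}\right]-\left(E\!\left[-\tfrac{1}{2}Yf(Y)\right]\right)^{\!2}
=\operatorname{Var}\!\left[-\tfrac{1}{2}Yf(Y)\right]\ge 0,
\end{align*}
where the last inequality is just the non-negativity of variance. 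This representation is exactly the analogue of \eqref{eqve} for the $\phi(x)=x$ specialization of \eqref{4Tir}, so no new computation is needed.

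Combining the two steps, $VJ^w(X)=0\le VJ^w(Y)$ for any continuous random variable $Y$, which is precisely the definition of $X\overset{VJ^w}{\le}Y$. There is no real obstacle here: the only thing one has to be a little careful about is that $Y f(Y)$ actually has a finite second moment, i.e.\ that the integral $\int x^{2}f^{3}(x)\,dx$ is finite, so that the variance expression makes sense; if it is infinite one adopts the natural convention $VJ^w(Y)=+\infty$, and the inequality still holds trivially. Thus the corollary follows immediately from Proposition \ref{p2.22} and the variance representation of weighted varextropy.
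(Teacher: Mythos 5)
Your proof is correct and follows exactly the route the paper intends: the corollary is stated as an immediate consequence of Proposition \ref{p2.22} together with the observation that $VJ^w(Y)=\operatorname{Var}\bigl[-\tfrac{1}{2}Yf(Y)\bigr]\ge 0$. Your added remark about finiteness of $\int x^2 f^3(x)\,dx$ is a reasonable extra precaution but not something the paper addresses.
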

\begin{proposition}
Let $X_{k:n}$ denote the kth order statistic of the reciprocal distribution. Then 
\begin{itemize}
\item[a)] $X_{k:n}\overset{VJ^w}{\leq}X_{1:n}$ and $X_{k:n}\overset{VJ^w}{\leq}X_{n:n}$ for all $1\leq k\leq n$.
\item[b)] when $n$ is even, we have $X_{\frac{n}{2}: n}\overset{VJ^w}{\leq} X_{k:n}$ for all $1\leq k\leq n$.
\item[c)] when $n$ is odd, we have $X_{\frac{n+1}{2}: n}\overset{VJ^w}{\leq} X_{k:n}$ for all $1\leq k\leq n$.
\end{itemize}
\end{proposition}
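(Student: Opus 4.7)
The plan is to pass from $X_{k:n}$ to uniform order statistics via the probability-integral transform and to express everything in terms of beta-density moments. Write $L=\ln(b)-\ln(a)$, so the reciprocal density is $f(x)=1/(xL)$ on $(a,b)$; the key feature is that $xf(x)=1/L$ is constant, which is exactly what makes $VJ^w(X)=0$ in Proposition \ref{p2.22}. Setting $U_{k:n}=F(X_{k:n})$ produces a uniform order statistic with beta density $B_{k,n}(u)=\tfrac{n!}{(k-1)!(n-k)!}u^{k-1}(1-u)^{n-k}$, and the change-of-variables rule yields $f_{X_{k:n}}(x)=B_{k,n}(F(x))/(xL)$. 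Substituting into the reformulation $VJ^w(Y)=\tfrac{1}{4}\mathrm{Var}(Yf_Y(Y))$ (immediate from \eqref{vew}) one finds $X_{k:n}f_{X_{k:n}}(X_{k:n})=B_{k,n}(U_{k:n})/L$, whence
\[
VJ^w(X_{k:n})=\frac{1}{4L^2}\bigl[M_3(k,n)-M_2(k,n)^2\bigr],\qquad M_p(k,n):=\int_0^1 B_{k,n}(u)^p\,du.
\]
Because $L$ does not depend on $k$, the comparison across $k$ is governed entirely by the single function $V(k):=M_3(k,n)-M_2(k,n)^2$.

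Next I would exploit symmetry: the change $u\mapsto 1-u$ gives $B_{k,n}(u)=B_{n-k+1,n}(1-u)$, hence $M_p(k,n)=M_p(n-k+1,n)$ and $VJ^w(X_{k:n})=VJ^w(X_{n-k+1:n})$. This immediately yields the equality $VJ^w(X_{1:n})=VJ^w(X_{n:n})$ in part (a) and places the proposed minimisers, namely $k=n/2$ when $n$ is even and $k=(n+1)/2$ when $n$ is odd, on the axis of symmetry. With this symmetry in hand, all three parts of the proposition reduce to the single monotonicity assertion that $V(k)$ is non-increasing on $k\in\{1,2,\dots,\lceil n/2\rceil\}$.

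For this monotonicity step I would use the beta integral to write $M_p(k,n)=\bigl(\tfrac{n!}{(k-1)!(n-k)!}\bigr)^p\tfrac{(p(k-1))!(p(n-k))!}{(p(n-1)+1)!}$ together with the clean ratio
\[
\frac{M_p(k+1,n)}{M_p(k,n)}=\Bigl(\frac{n-k}{k}\Bigr)^p\prod_{j=0}^{p-1}\frac{pk-j}{pn-pk-j},
\]
which turns $V(k+1)\le V(k)$ into an explicit rational inequality in $k$ and $n$. The cleanest route is to extend $k$ to a real parameter on $[1,(n+1)/2]$, differentiate $V(k)$ via the digamma function, and show that the sign of $V'(k)$ agrees with that of $2k-n-1$, which bundles all three parts of the proposition into one fact.

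The main obstacle will be this last sign analysis. Both $M_3(k,n)$ and $M_2(k,n)^2$ are themselves U-shaped in $k$ with minimum at the centre, so both decrease toward the middle and the desired monotonicity of $V=M_3-M_2^2$ relies on the decrease of $M_3$ dominating. A natural back-up, if direct differentiation proves unwieldy, is to use the log-concavity of $k\mapsto M_p(k,n)$ together with the Cauchy--Schwarz bound $M_2^2\le M_1M_3=M_3$ to control the gap $M_3-M_2^2$ term-by-term, cross-checking on small $n$ where $V(k)$ can be tabulated directly from the closed form.
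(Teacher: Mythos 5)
The paper states this proposition without any proof, so there is nothing to compare against directly; judged on its own, your reduction is correct and is exactly the specialization one gets from the paper's subsequent formula for $VJ^w(X_{r:n})$ to the reciprocal law. Since $xf(x)=1/L$ is constant, $X_{k:n}f_{X_{k:n}}(X_{k:n})=B_{k,n}(U_{k:n})/L$ and hence $VJ^w(X_{k:n})=\frac{1}{4L^2}\bigl[M_3(k,n)-M_2(k,n)^2\bigr]$ with $M_p(k,n)=\int_0^1 B_{k,n}(u)^p\,du$; the reflection $u\mapsto 1-u$ gives $VJ^w(X_{k:n})=VJ^w(X_{n-k+1:n})$, so all three parts do reduce to showing $V(k)=M_3(k,n)-M_2(k,n)^2$ is maximal at $k=1$ and minimal at the central index. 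Numerical checks (e.g.\ $n=5$: $V(1)\approx1.899$, $V(2)\approx0.278$, $V(3)\approx0.207$) confirm the claim is true.

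The genuine gap is that this last step — the only nontrivial content of the proposition — is announced but not carried out, and you yourself flag it as ``the main obstacle.'' Your two fallback tools do not close it: Cauchy--Schwarz gives $M_2(k,n)^2\le M_1(k,n)M_3(k,n)=M_3(k,n)$, which only re-proves $V(k)\ge0$ and says nothing about comparing different $k$; and log-concavity of $k\mapsto M_p(k,n)$ does not by itself control a \emph{difference} of two such quantities. Indeed both $M_3$ and $M_2^2$ decrease as $k$ moves toward the centre (your ratio formula shows $M_p(k+1,n)/M_p(k,n)\le1$ for $k<n/2$, since each factor $\frac{pk-j}{p(n-k)-j}\le\frac{k}{n-k}$), so the sign of $V(k+1)-V(k)$ hinges on a quantitative dominance of the $M_3$ decrease over the $M_2^2$ decrease that you never establish; the proposed digamma computation is plausible but is precisely the hard part. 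To complete the proof you would need to verify the explicit rational inequality $V(k+1)\le V(k)$ for $1\le k<(n+1)/2$ from the closed forms $M_p(k,n)=\bigl(\tfrac{n!}{(k-1)!(n-k)!}\bigr)^p\tfrac{(p(k-1))!\,(p(n-k))!}{(p(n-1)+1)!}$, or find a genuinely different argument; as it stands the proposal is a correct reduction plus an unproven combinatorial inequality.
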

\begin{proposition}
Let $X_1, X_2, \ldots, X_n$ be a random sample from a distribution with an absolutely continuous cumulative distribution  $F(x)$ and probability density function $f(x)$, then
\begin{align}
VJ^w(X_{r:n})&=\frac{B(3r-2, 3(n-r)+1)}{4B^3(r, n-r+1)} E[(F^{-1}(V1))^2f^2(F^{-1}(V1))]\nonumber\\&-
\frac{B^2(2r-1, 2(n-r)+1)}{4B^3(r, n-r+1)} E^2[F^{-1}(V2)f(F^{-1}(V2))].
\end{align}
where $V_1\sim Beta(3r-2, 3(n-r)+1)$ and $V_2\sim Beta(2r-1, 2(n-r)+1)$.
\end{proposition}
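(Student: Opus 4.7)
The plan is to apply the definition \eqref{vew} of weighted varextropy directly to $X_{r:n}$, using its classical density
$$f_{r:n}(x) = \frac{1}{B(r,n-r+1)}\,F^{r-1}(x)\,\overline F^{n-r}(x)\,f(x),$$
and then to evaluate the two resulting integrals via the probability integral transform $u=F(x)$, recognising each as a Beta-expectation. So the work splits neatly into two parallel computations, one for the cubic term $\frac{1}{4}\int x^2 f_{r:n}^3(x)\,dx$ and one for $(J^w(X_{r:n}))^2$.

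First I would write
$$VJ^w(X_{r:n}) = \frac{1}{4}\int_{-\infty}^{+\infty} x^2 f_{r:n}^3(x)\,dx \;-\; \bigl(J^w(X_{r:n})\bigr)^2.$$
Cubing $f_{r:n}$ and substituting $u=F(x)$ in the first integral produces
$$\int x^2 f_{r:n}^3(x)\,dx = \frac{1}{B^3(r,n-r+1)} \int_0^1 (F^{-1}(u))^2\, u^{3(r-1)}(1-u)^{3(n-r)}\, f^2(F^{-1}(u))\,du.$$
Multiplying and dividing by $B(3r-2,3(n-r)+1)$, the integrand becomes $(F^{-1}(u))^2 f^2(F^{-1}(u))$ weighted by the $\mathrm{Beta}(3r-2,3(n-r)+1)$ density, which yields the first term of the statement.

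For $(J^w(X_{r:n}))^2$ I would carry out the analogous calculation for $\int x f_{r:n}^2(x)\,dx$: squaring $f_{r:n}$ and substituting $u=F(x)$ reduces the integral to $B(2r-1,2(n-r)+1)/B^2(r,n-r+1)$ times $E[F^{-1}(V_2) f(F^{-1}(V_2))]$, where $V_2\sim \mathrm{Beta}(2r-1,2(n-r)+1)$. Squaring this expression, together with the factor $\tfrac14$ coming from $(-\tfrac12 \int x f_{r:n}^2(x)\,dx)^2$, produces the second term of the claim.

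No step in this argument is delicate: it is purely a change of variables plus identification of Beta densities. The only bookkeeping to verify is that the shape parameters $3r-2$ and $2r-1$ remain positive so that $V_1$ and $V_2$ are genuine random variables, which holds for every $r\geq 1$. Integrability of $F^{-1}$ and $f(F^{-1})$ against the relevant Beta weights is the tacit regularity hypothesis, as is standard in order-statistic identities of this kind.
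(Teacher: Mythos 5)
Your approach is the natural one — plug the order-statistic density $f_{r:n}(x)=\frac{1}{B(r,n-r+1)}F^{r-1}(x)\overline F^{\,n-r}(x)f(x)$ into the definition of $VJ^w$, substitute $u=F(x)$, and read off Beta expectations — and the paper in fact states this proposition without any proof, so there is nothing to compare against; your two changes of variables and the identification of the shape parameters $3r-2,\;3(n-r)+1$ and $2r-1,\;2(n-r)+1$ are all correct. However, there is one point you should not have waved through: your own (correct) computation of the second term gives
\[
\bigl(J^w(X_{r:n})\bigr)^2=\frac{1}{4}\left(\frac{B(2r-1,2(n-r)+1)}{B^2(r,n-r+1)}\right)^2E^2\!\left[F^{-1}(V_2)f(F^{-1}(V_2))\right]
=\frac{B^2(2r-1,2(n-r)+1)}{4B^{4}(r,n-r+1)}E^2[\cdots],
\]
with $B^{4}(r,n-r+1)$ in the denominator, whereas the displayed statement has $B^{3}(r,n-r+1)$. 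You assert that squaring "produces the second term of the claim," which is not literally true. The subsequent uniform-distribution example in the paper carries $B^4(r,n-r+1)$ in exactly this position, so the proposition as printed almost certainly contains a typo and your derivation is the correct version — but a careful proof must flag the mismatch rather than silently claim agreement with the stated formula.
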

\begin{proposition}
Let $X_1, X_2, \ldots, X_n$ be a random sample from a symmetric distribution around zero with an absolutely continuous cumulative distribution  $F(x)$ and probability density function $f(x)$, then
\begin{align}
VJ^w(X_{r:n})=VJ^w(X_{n-r+1:n}).
\end{align}
\end{proposition}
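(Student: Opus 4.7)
The natural route is to exploit the symmetry $f(-x)=f(x)$ (equivalently $F(-x)=1-F(x)$) at the level of order-statistic densities and then reduce the identity to a simple change of variable $x\mapsto -x$ inside the two integrals that define $VJ^w$. Concretely, my plan is first to establish the pointwise identity
\[
f_{r:n}(-x) \;=\; \frac{n!}{(r-1)!(n-r)!}[F(-x)]^{r-1}[1-F(-x)]^{n-r}f(-x) \;=\; f_{n-r+1:n}(x),
\]
which follows immediately from symmetry of $f$ and the relation $F(-x)=1-F(x)$, combined with swapping the exponents $r-1$ and $n-r$.

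Next I would plug this into the defining formula
\[
VJ^w(X_{n-r+1:n})=\frac{1}{4}\int x^2 f_{n-r+1:n}^3(x)\,dx-\left(-\frac{1}{2}\int x f_{n-r+1:n}^2(x)\,dx\right)^2
\]
and perform the substitution $x=-u$ in each integral. Using $f_{n-r+1:n}(-u)=f_{r:n}(u)$, the cubic integral transforms as
\[
\int_{-\infty}^{+\infty} x^2 f_{n-r+1:n}^3(x)\,dx \;=\; \int_{-\infty}^{+\infty} u^2 f_{r:n}^3(u)\,du,
\]
while the quadratic integral picks up a sign,
\[
\int_{-\infty}^{+\infty} x f_{n-r+1:n}^2(x)\,dx \;=\; -\int_{-\infty}^{+\infty} u f_{r:n}^2(u)\,du,
\]
so that $J^w(X_{n-r+1:n})=-J^w(X_{r:n})$.

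Finally, since only the square $(J^w(\cdot))^2$ appears in $VJ^w$, the sign change is immaterial and one obtains
\[
VJ^w(X_{n-r+1:n})=\frac{1}{4}\int u^2 f_{r:n}^3(u)\,du-(J^w(X_{r:n}))^2=VJ^w(X_{r:n}),
\]
which is the claimed identity. There is no serious obstacle here; the only point that requires care is keeping track of the sign picked up by $-\frac{1}{2}\int x f^2(x)\,dx$ under $x\mapsto -x$ and noticing that squaring erases it, while the $x^2$ weight in the cubic integral is invariant. Equivalently, one could observe that symmetry implies $X_{n-r+1:n}\stackrel{d}{=}-X_{r:n}$ and then invoke the general fact (direct from the definition) that $VJ^w$ is invariant under negation, which gives the same conclusion in one line.
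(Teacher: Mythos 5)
Your proof is correct and follows essentially the same route as the paper: the substitution $x\mapsto -x$ combined with $F(-x)=1-F(x)$ and $f(-x)=f(x)$ to swap the exponents $r-1$ and $n-r$ in the order-statistic density. If anything you are more careful than the printed proof, which applies the substitution to a single integral that does not match the actual definition of $VJ^w(X_{r:n})$; your handling of both integrals, including the sign flip in $J^w$ that is erased by squaring, is the complete version of the argument.
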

\begin{proof}
We have 
\begin{align}
VJ^w(X_{r:n})=&\frac{1}{B(r, n-r+1)}\int_{-\infty}^{+\infty}x^2(F(x))^{r-1}(1-F(x))^{n-r}f(x)dx\\&=\frac{1}{B(r, n-r+1)}\int_{-\infty}^{+\infty}y^2(F(-y))^{r-1}(1-F(-y))^{n-r}f(-y)dy.
\end{align}
Now, since $F(-y)=1-F(y)$ and $f(-y)=f(y)$, hence
\begin{align}
VJ^w(X_{r:n})&=\frac{1}{B(n-r+1, r)}\int_{-\infty}^{+\infty}y^2(F(y))^{n-r}(1-F(y))^{r-1}f(y)dy=VJ^w(X_{n-r+1:n}),
\end{align}
and hence the desired result is obtained.
\end{proof}
The following examples are presented to demonstrate the weighted varextropy of the order statistics $X_{r: n}$ derived from selected probability distributions.
\begin{exmp}
If $X$ is uniformly distributed on $(a, b)$, then
\begin{align*}
VJ^w(X_{r:n})&=\frac{1}{4}\left[\frac{B(3r, 3(n-r)+1)}{B^3(r,n-r+1)}-\frac{B^2(2r, 2(n-r)+1)}{B^4(r,n-r+1)}\right]\\&+\frac{a^2}{4(b-a)^2}
\left[\frac{B(3r-2, 3(n-r)+1)}{B^3(r,n-r+1)}-\frac{B^2(2r-1, 2(n-r)+1)}{B^4(r,n-r+1)}\right]
\\&+\frac{a}{2(b-a)}\left[\frac{B(3r-1, 3(n-r)+1)}{B^3(r,n-r+1)}-\frac{B(2r, 2(n-r)+1)B(2r-1, 2(n-r)+1)}{B^4(r,n-r+1)}\right].
\end{align*}
\end{exmp}
\begin{exmp}
If $X$ has exponential distribution with mean $\theta$, then 
\end{exmp}
\begin{align*}
VJ^w(X_{r:n})&=\frac{B(3r-2, 3n-3r+3)}{B^3(r, n-r+1)}\left[\Psi(1,3n-3r)-\frac{1}{9}\left(\Psi(1,n+\frac{1}{3})+\Psi(1,n+\frac{2}{3})+\Psi(1,n)\right)\right.
\\&+(\Psi(3n-3r)-\ln 3)^2+\frac{1}{9}\left(\Psi^2(n+\frac{1}{3})+\Psi^2(n+\frac{2}{3})+\Psi^2(n)+2\Psi(n+\frac{1}{3})\Psi(n+\frac{2}{3})\right.\\&\left.+2\Psi(n+\frac{1}{3})\Psi(n)+2\Psi(n+\frac{2}{3})\Psi(n)\right)\\&+2\Big(\frac{1}{(3n-3r)(3n-3r+1)}+\frac{1}{(3n-3r)(3n-3r+2)}
+\frac{1}{(3n-3r+1)(3n-3r+2)}\Big)
\\&-2\left(\ln(3)-\Psi(3n-3r)\right)\left(\frac{1}{3n-3r}+\frac{1}{3n-3r+1}+\frac{1}{3n-3r+2}\right)\\&-\frac{2}{3}\left(\Psi(n+\frac{1}{3})+\Psi(n+\frac{2}{3})+\Psi(n)+\frac{1}{n}\right)\\&\left.\times\left(\Psi(3n-3r)-\ln(3)-\frac{1}{3n}+\frac{1}{3n-3r}+\frac{1}{3n-3r+1}+\frac{1}{3n-3r+2}\right)\right]\\&-\frac{B^2(2r-1, 2n-2r+2)}{B^4(r, n-r+1)}\left(\Psi(2n-2r)+\frac{1}{2n-2r}+\frac{1}{2n-2r+1}-\ln 2-\frac{1}{2}\Big(\Psi(n+\frac{1}{2})+\Psi(n)+\frac{1}{n}\Big)\right)^2.
\end{align*}
%%%%%%%%%%%%%%%%%%%%%%%%%%%%%%%%%%%%%%%%%%%%%%%%%%%%%%%%%%%%%
\section{Kernel estimation}
Let $X_1,\ldots, X_n$ be non-negative independent observations from density function $f$. The kernel density estimator for $f$ is defined by \cite{Parzen1962} as 
\begin{equation}\label{eq1}
f_n(x)=\frac{1}{nh_n}\sum_{i=1}^{n}K\big(\frac{x-X_i}{h_n}\big),
\end{equation}
where $K$ is a density kernel function and $h_n\rightarrow 0$ as $n\rightarrow \infty$ is a bandwidth sequence. Based on $f_n(x)$ \cite{Zhang2025} proposed the following plug-in estimator for $VJ^w(X)$ as bellow
\begin{equation}\label{eq2}
\hat{V}J_n^w(X)=\frac{1}{4}\left(\int_{0}^{+\infty}x^2f_n^3(x) dx-{\Big(\int_{0}^{+\infty}xf_n^2(x) dx\Big)}^2\right).
\end{equation}
\cite{Zhang2025} investigated some asymptotic properties for $\hat{V}J_n^w(X)$ in $\eqref{eq2}.$ $\hat{V}J_n^w(X)$ can be estimated by using numerical integration. When the dimensionality of random variable $X$ is high, numerical integration becomes unstable and computationally demanding. \cite{Joe} suggested using the second form of estimation that is resubstitution instead of numerical integration. Inspired by the estimator proposed by \cite{Joe}, we propose the following estimator for ${V}J^w(X)$:
\begin{align}\label{eq3}
\tilde{V}J^w(X) &=\frac{1}{4}\left(\int_0^{+\infty}x^2 {(f_n(x))}^2 dF_n(x)-{\Big(\int_0^{+\infty}x f_n(x) dF_n(x)\Big)}^2\right)\nonumber\\
&= \frac{1}{4n}\sum_{i=1}^{n}X_i^2{(\tilde{f}_{n}(X_i))}^2-\frac{1}{4}{\Big(\frac{1}{n}\sum_{i=1}^{n}X_i \tilde{f}_n(X_i)\Big)}^2,
\end{align}
where $F_n(x)=\frac{1}{n}\sum_{i=1}^{n}I(X_i\leq x)$ is the empirical distribution function and 
\begin{equation}\label{eq4}
\tilde{f}_n(X_i)=\frac{1}{(n-1)h_{n-1}}\sum_{j=1, j\neq i}^{n}K\Big(\frac{X_i-X_j}{h_{n-1}}\Big).
\end{equation} 
The weighted varextropy $VJ^w(X)$ can be rewritten as
\begin{equation}\label{eq5}
VJ^w(X)=\frac{1}{4}\left(\int_0^1 \frac{{(Q(u))}^2}{{(q(u))}^2} du-{\Big(\int_0^1 \frac{Q(u)}{q(u)} du\Big)}^2\right),
 \end{equation}
 where $Q(u)=\inf \{x ; F(x)\geq u\}, ~ 0\leq u \leq 1,$ is the quantile function corresponding to the distribution function $F(x)$ and $q(u)=Q'(u)=\frac{1}{f(Q(u))}$ is its quantile density function. Let $X_{(i)}$ be the $ith$ order statistic, a kernel estimator for $q(u)$ is given by \cite{Soni} as 
 \begin{equation}\label{eq6}
 \tilde{q}_n(u)=\frac{1}{h_n}\int_0^1 \frac{K(\frac{t-u}{h_n})}{f_n(Q_n(t))}dt=\frac{1}{nh_n}\sum_{i=1}^{n}
 \frac{K(\frac{S_i-u}{h_n})}{f_n(X_{(i)})},
 \end{equation} 
 where $S_i$ is the proportion of observations less than or equal to $X_{(i)}$ and $Q_n(u)=\inf\{ x; F_n(x)\geq u\}, 0\leq u \leq 1,$ is the empirical estimator of $Q(u).$ Weighted varextropy in the form of $\eqref{eq5}$ proposes the third estimator for weighted varextropy based on $\tilde{q}_n(u)$ and $Q_n(u)$ as
\begin{equation}\label{eq7}
\bar{V}J^w(X)=\frac{1}{4}\left(\int_0^1 \frac{{(Q_n(u))}^2}{{(\tilde{q}_n(u))}^2} du-{\Big(\int_0^1\frac{Q_n(u)}{\tilde{q}_n(u)}du\Big)}^2\right).
\end{equation}
We list the assumptions used in this section:
%{\bf Assumptions}
\begin{description}
\item[$(1)$]
$K(\cdot)$ has finite support.
\item[$(2)$]
$K$ is symmetric about zero.

\item[$(3)$]
There is a positive constant $M$ such that 
$ 
\left|K(x)-K(y)\right|\leq M \left|x-y\right|.
$
\end{description}
In the following, we investigate the almost sure (a.s.) consistency of the proposed estimators.
\begin{theorem}
If $f$ is bounded and $E(X^2)<\infty,$ then we have
\begin{equation*}
\lim_{n \rightarrow \infty}\tilde{V}J^w(X)=VJ^w(X) ~ a.s.
\end{equation*}
\end{theorem}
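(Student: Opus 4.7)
The plan is to decompose the estimator into its two sample moments, prove almost-sure convergence of each, and recombine by continuous mapping. Writing
\[
\tilde V J^w(X) = \tfrac14 A_n - \tfrac14 B_n^{\,2}, \qquad
A_n = \frac{1}{n}\sum_{i=1}^n X_i^{\,2}\,\tilde f_n^{\,2}(X_i),\qquad
B_n = \frac{1}{n}\sum_{i=1}^n X_i\,\tilde f_n(X_i),
\]
the target limit is $\tfrac14\alpha - \tfrac14\beta^{\,2}$ with $\alpha = E[X^2 f^2(X)]$ and $\beta = E[Xf(X)]$, so by continuity of $(a,b)\mapsto \tfrac14 a - \tfrac14 b^{\,2}$ it suffices to prove $A_n \to \alpha$ and $B_n \to \beta$ almost surely.

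For $B_n$ I would write
\[
B_n = \frac{1}{n}\sum_{i=1}^n X_i f(X_i) + \frac{1}{n}\sum_{i=1}^n X_i\bigl[\tilde f_n(X_i) - f(X_i)\bigr].
\]
Boundedness of $f$ gives $|Xf(X)| \leq \|f\|_\infty X$, and $E[X]\leq\sqrt{E[X^2]}<\infty$, so the first average converges a.s.\ to $\beta$ by the strong law of large numbers. The remainder is bounded in absolute value by $\|\tilde f_n - f\|_\infty \cdot n^{-1}\sum_i X_i$; the sample mean tends to $E[X]$ a.s.\ by the SLLN, and the sup-norm tends to zero a.s.\ by uniform almost-sure consistency of the leave-one-out kernel estimator, so the remainder is $o(1)$ a.s. For $A_n$ I use the factorization $\tilde f_n^{\,2}(X_i) - f^2(X_i) = \bigl[\tilde f_n(X_i)-f(X_i)\bigr]\bigl[\tilde f_n(X_i)+f(X_i)\bigr]$ to write
\[
A_n = \frac{1}{n}\sum_{i=1}^n X_i^{\,2} f^2(X_i) + \frac{1}{n}\sum_{i=1}^n X_i^{\,2}\bigl[\tilde f_n(X_i) - f(X_i)\bigr]\bigl[\tilde f_n(X_i) + f(X_i)\bigr].
\]
The first average converges a.s.\ to $\alpha$ by the SLLN since $E[X^2 f^2(X)]\leq \|f\|_\infty^{\,2} E[X^2]<\infty$. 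The remainder is dominated by $\|\tilde f_n-f\|_\infty\bigl(\|\tilde f_n\|_\infty+\|f\|_\infty\bigr)\cdot n^{-1}\sum X_i^{\,2}$, where the last factor converges a.s.\ to $E[X^2]$, the middle factor is eventually bounded a.s.\ because $\|\tilde f_n\|_\infty\leq \|\tilde f_n-f\|_\infty+\|f\|_\infty$, and the leading factor goes to zero a.s.

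The main obstacle, and the only nontrivial ingredient, is the uniform almost-sure consistency $\|\tilde f_n - f\|_\infty \to 0$ for the leave-one-out estimator. Under the three stated hypotheses on $K$ (compact support, symmetry, Lipschitz continuity) together with the standard bandwidth conditions $h_{n-1}\to 0$ and $nh_{n-1}/\log n\to\infty$, this is a classical result for the Parzen--Rosenblatt estimator; the leave-one-out version differs from the ordinary KDE with $n-1$ data points by a uniformly $O\bigl((nh_{n-1})^{-1}\bigr)$ quantity, hence inherits the same almost-sure uniform limit. Once this uniform convergence is established, both $A_n\to\alpha$ and $B_n\to\beta$ follow from the estimates above, and the conclusion $\tilde V J^w(X) \to VJ^w(X)$ a.s.\ is immediate by continuous mapping.
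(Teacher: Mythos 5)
Your proof follows essentially the same route as the paper: decompose the estimator into its two empirical moments, control the ``oracle'' averages $\frac1n\sum X_i^2 f^2(X_i)$ and $\frac1n\sum X_i f(X_i)$ by the strong law of large numbers (using $\|f\|_\infty<\infty$ and $E[X^2]<\infty$ for integrability), and absorb the remainders via the uniform almost-sure consistency $\sup_x|f_n(x)-f(x)|\to 0$ of the kernel estimator, which is exactly the Prakasa Rao result the paper invokes. If anything you are slightly more careful than the paper, since you make explicit both the bandwidth conditions needed for uniform consistency and the fact that the leave-one-out estimator $\tilde f_n$ differs from the ordinary kernel estimator by a uniformly negligible amount, points the paper passes over silently.
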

\begin{proof}
\begin{align}\label{eq8}
\tilde{V}J^w(X)-VJ^w(X) &\leq \frac{1}{4} \sup_x\left|f_n^2(x)-f^2(x)\right|\int_0^{+\infty} x^2 dF_n(x)\nonumber\\
&+\frac{1}{4} \left(\int_0^{+\infty} x^2 f^2(x) dF_n(x)- \int_{0}^{+\infty} x^2 f^3(x) dx\right)\nonumber\\
&+ \frac{1}{4}\left({\Big(\int_0^{+\infty}xf(x)dF(x)\Big)}^2-{\Big(\int_{0}^{+\infty}xf_n(x)dF_n(x)\Big)}^2\right)\nonumber\\
&:=I_1+I_2+I_3. 
\end{align}
The Kolmogorov law of large numbers implies that
\begin{align}\label{eq9}
\int_0^{+\infty}x^2 dF_n(x)=\frac{1}{n}\sum_{i=1}^{n}X_i^2\rightarrow E(X^2)~ a.s.,
\end{align}
\begin{align}\label{eq10}
\int_{0}^{+\infty}x^2 f^2(x)dF_n(x)=\frac{1}{n}\sum_{i=1}^{n}X_i^2 f^2(X_i)\rightarrow \int_0^{+\infty}x^2 f^3(x) dx<\infty~ a.s.
\end{align}
For $n\rightarrow \infty,$ $\sup_x\left|f_n(x)-f(x)\right|\rightarrow 0,$ (see \cite{Prakas}). This last result, $\eqref{eq9}$ and $\eqref{eq10}$ conclude that
\begin{align}\label{eq11}
\lim_{n \rightarrow \infty}I_1=0~a.s.,
\end{align}
and 
\begin{align}\label{eq12}
\lim_{n \rightarrow \infty}I_2=0~a.s.,
\end{align}
In a similar way we can see that 
\begin{equation*}
\lim_{n \rightarrow \infty}\int_{0}^{\infty}xf_n(x) dF_n(x)=\int_{0}^{\infty}x f(x) dF(x),
\end{equation*}
and hence
\begin{align}\label{eq13}
\lim_{n \rightarrow \infty}I_3=0~a.s.,
\end{align}
$\eqref{eq8}, \eqref{eq11}, \eqref{eq12}$ and $\eqref{eq13}$ complete the proof.
\end{proof}
\begin{theorem}
Suppose that $f$ is bounded and $\tau=\sup\{x ; F(x)<1\}<\infty.$

Then as $n \rightarrow \infty$
\begin{align*}
\bar{V}J^w(X)\rightarrow VJ^w(X).
\end{align*}
\end{theorem}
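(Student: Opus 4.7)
The plan is to show that each of the two integrals in the definition of $\bar V J^w(X)$ converges a.s.\ to its counterpart in the quantile representation \eqref{eq5} of $VJ^w(X)$, and then to combine the pieces. First I would write the difference as
\begin{align*}
\bar V J^w(X) - VJ^w(X) &= \frac{1}{4}\int_0^1\!\left[\frac{Q_n^2(u)}{\tilde q_n^2(u)} - \frac{Q^2(u)}{q^2(u)}\right]du \\
&\quad - \frac{1}{4}\!\left[\left(\int_0^1\!\frac{Q_n(u)}{\tilde q_n(u)}du\right)^{\!2} - \left(\int_0^1\!\frac{Q(u)}{q(u)}du\right)^{\!2}\right],
\end{align*}
so it suffices to prove convergence of the two single integrals; the squared term then follows by the continuous mapping theorem.

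Next I would collect the two key convergence ingredients. Since $\tau<\infty$, $F$ has bounded support and the empirical quantile function satisfies $\sup_{u\in[0,1]}|Q_n(u)-Q(u)|\to 0$ almost surely (Glivenko--Cantelli applied to $F_n$ together with continuity of $Q$). Second, under the kernel assumptions $(1)$--$(3)$ and $\|f\|_\infty<\infty$, the estimator $\tilde q_n(u)$ in \eqref{eq6} is strongly, uniformly consistent for $q(u)$, as established by \cite{Soni}. The hypothesis that $f$ is bounded provides the crucial lower bound $q(u)=1/f(Q(u))\ge 1/\|f\|_\infty>0$ on $[0,1]$; together with $Q(u)\le\tau$, this makes $Q/q$ and $Q^2/q^2$ bounded functions on $[0,1]$.

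With these ingredients in hand, on the event of full probability on which the two uniform convergences hold, $\sup_u|\tilde q_n(u)-q(u)|\to 0$ combined with $q(u)\ge 1/\|f\|_\infty$ forces $\tilde q_n(u)\ge 1/(2\|f\|_\infty)$ uniformly in $u$ for all sufficiently large $n$. Since $Q_n(u)\le\tau$ for every $n$, this yields
\[
\sup_{u\in[0,1]}\frac{Q_n^2(u)}{\tilde q_n^2(u)}\le 4\tau^2\|f\|_\infty^2 \quad\text{eventually,}
\]
an integrable majorant on $[0,1]$. Pointwise, $Q_n(u)/\tilde q_n(u)\to Q(u)/q(u)$ a.s., so the dominated convergence theorem delivers the a.s.\ convergence of each of the two integrals. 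Applying the continuous mapping theorem to $x\mapsto x^2$ handles the bracket involving squares, and reassembling the decomposition above gives $\bar V J^w(X)\to VJ^w(X)$.

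The principal obstacle is uniformly controlling the denominator $\tilde q_n(u)$ away from zero; without boundedness of $f$ this would fail near the endpoints of the support and the ratios could blow up. The boundedness hypothesis provides exactly what is needed: it pins $q$ below by a strictly positive constant on all of $[0,1]$, and uniform consistency of the kernel quantile-density estimator transports this lower bound to $\tilde q_n$ for large $n$. Once the denominators are safely controlled, the $\tau<\infty$ bound on the numerators closes the dominated-convergence step and the rest is routine.
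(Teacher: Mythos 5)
Your proof is correct and follows essentially the same route as the paper's: both arguments rest on the uniform consistency of $Q_n$ and of the kernel quantile-density estimator $\tilde q_n$, together with the bounds $Q(u)\le\tau$ and $q(u)=1/f(Q(u))\ge 1/\sup_x f(x)$ supplied by the hypotheses, differing only in that you prove convergence of each integral wholesale while the paper splits the first difference into two terms via an add-and-subtract. If anything, your explicit eventual lower bound $\tilde q_n(u)\ge 1/(2\|f\|_\infty)$ makes the control of the denominators cleaner than in the paper, where the same fact is used only implicitly in bounding $(q+\tilde q_n)/(\tilde q_n^2 q^2)$ by $2(\sup_x f(x))^3$.
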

\begin{proof}
\begin{align}\label{I}
\bar{V}J^w(X)-VJ^w(X)&= \frac{1}{4}\int_{0}^1 \frac{1}{{(\tilde{q}_n(u))}^2}\left({(Q_n(u))}^2-{(Q(u))}^2\right) du\nonumber\\
&+\frac{1}{4}\int_0^1 {(Q(u))}^2\left[\frac{1}{{(\tilde{q}_n(u))}^2}-\frac{1}{{(q(u))}^2}\right]du\nonumber\\
&+\frac{1}{4}\left({\Big(\int_0^1 \frac{Q(u)}{q(u)}du\Big)}^2-{\Big(\int_0^1 \frac{Q_n(u)}{\tilde{q}_n(u)}du\Big)}^2\right)\nonumber\\
&:=\frac{1}{4}I_{n1}+\frac{1}{4}I_{n2}+\frac{1}{4}I_{n3}.
\end{align}
On the other hand
\begin{align*}
\frac{1}{{(\tilde{q}_n (u))}^2}-\frac{1}{{(q(u))}^2}=\left(q(u)-\tilde{q}_n(u)\right)\frac{q(u)+\tilde{q}_n(u)}{{(\tilde{q}_n(u))}^2{(q(u))}^2}.
\end{align*}
Using the proof of Theorem $3.4$ of \cite{Subh}, $\sup_u\left|\tilde{q}_n(u)-q(u)\right|\rightarrow 0$ as $n \rightarrow \infty,$ which implies that
\begin{align}\label{II}
\lim_{n \rightarrow \infty}\sup_u \left| \frac{1}{{(\tilde{q}_n (u))}^2}-\frac{1}{{(q(u))}^2}\right|
\leq 2 \lim_{n \rightarrow \infty}\sup_u \left|q(u)-\tilde{q}_n(u)\right|{(\sup_x f(x))}^3=0.
\end{align}
To deal with the term $I_{n2}$  we can observe that
\begin{align}\label{III}
\lim_{n \rightarrow \infty}\left|I_{n2}\right|\leq \lim_{n \rightarrow \infty}\sup_u \left| \frac{1}{{(\tilde{q}_n (u))}^2}-\frac{1}{{(q(u))}^2}\right|\int_0^1 x^2 f(x) dx=0.
\end{align}
Next
\begin{align*}
\sup_{0<u<1}\left|Q_n^2(u)-Q^2(u)\right|&\leq
\sup_{0<u<1}\left|Q_n(u)-Q(u)\right|\{2|Q_n(u)|+\sup_{0<u<1}|Q(u)-Q_n(u)|\}\nonumber\\
&\leq \sup_{0<u<1}\left|Q_n(u)-Q(u)\right|\{2\tau+\sup_{0<u<1}\left|Q(u)-Q_n(u)\right|\}.
\end{align*}
For $n \rightarrow \infty, \sup_{0<u<1}\left|Q_n(u)-Q(u)\right|\rightarrow 0$ which implies that
\begin{align}\label{IV}
\lim_{n \rightarrow \infty}\sup_{0<u<1} \left| Q_n^2(u)-Q^2(u)\right|=0.
\end{align}
Now
\begin{align*}
|I_{n1}|\leq\sup_{0<u<1}\left|Q_n^2(u)-Q^2(u)\right|\left(\left|\int_0^1 \big(\frac{1}{{(\tilde{q}_n(u))}^2}-\frac{1}{{(q(u))}^2}\big)du\right|+{|sup_x f(x)|}^2\right).
\end{align*}
This last result, $\eqref{II}$ and $\eqref{IV}$ imply that
\begin{align}\label{V}
\lim_{n \rightarrow \infty}I_{n1}=0.
\end{align}
In a similar way we can show that
\begin{align}\label{VI}
\lim_{n \rightarrow \infty}\int_0^1 \frac{Q_n(u)}{\tilde{q}_n(u)}du=\int_{0}^1 \frac{Q(u)}{q(u)}du,
\end{align}
which implies that
\begin{align}\label{VII}
\lim_{n \rightarrow \infty}I_{n3}=0.
\end{align}
$\eqref{I}, \eqref{III}, \eqref{V}$ and $\eqref{VII}$ complete the proof.
\end{proof}
%\begin{theorem}\label{th1}
%Under conditions of $\bold{F_1} - \bold{F_2}$ , $\bold{K_1}$ , $\bold{K_4}$ and $\bo d{H_1}$, we can write
%\begin{equation}
%|J(f_n) - J(X)| = O({\frac{\sqrt{\log\log n}}{\sqrt{n}h_{n}^{2}}}) + O(h_n) \quad a.s.
%\end{equation}
%\end{theorem}
%\begin{proof}\label{p1}
%See the Appendix.
%\end{proof}
%\begin{remark}
%\end{remark}
\section{Simulation}
In this section, we present  the Monte Carlo studies on the biases and mean squared errors (MSEs) to demonstrate the efficacy of the proposed estimators. We generate random samples of different sizes from the Gamma(2,1) and Beta(2,1) distributions. 
For each sample size $n=10, 20, 30, 50, 100$, a total of 10,000 samples are drawn. 
%We consider $10000$ samples of various sample sizes .$ 
The Epanechnikov kernel $K(x)=\frac{3}{4}(1-x^2), ~|x|<1$ is used as the kernel function in all the cases. 
We compute the bias and MSE for each estimators $\hat{V}J^w(X), \tilde {V}J^w(X)$ and $\bar{V}J^w(X)$, using the bandwidth $h_n=1.06 s n^{-\frac{1}{5}},$ where $s$ is the sample standard deviation. 
Tables $\ref{ta1}$ and $\ref{ta2}$ show the simulated biases and MSEs of the proposed estimators for the gamma and beta distributed samples, respectively. 
From Tables $\ref{ta1} $ and $\ref{ta2}$, we observe that as the sample sizes increases, both the bias and the MSE decrease. In the case of the gamma distribution, Table $\ref{ta1},$ the estimator $\tilde{V}J^w(X)$ performs best in terms of bias and MSE. Under the beta distribution, Table $\ref{ta2},$ the biases and MSEs of $\bar{V}J^w(X)$ are consistently smaller than those of the other proposed estimators, expect for sample sizes $n=10$ and $20.$ 
In the tables, we use the notations $\hat{B}, \tilde{B}$ and $\bar{B}$ to denote the biases of
%for bias of
 $\hat{V}J^w(X), \tilde {V}J^w(X)$ and $\bar{V}J^w(X)$, respectively. Also we employ $\hat{M}, \tilde{M}$ and $\bar{M}$ for MSE of $\hat{V}J^w(X), \tilde {V}J^w(X)$ and $\bar{V}J^w(X)$, respectively.
 
\begin{table}[h!]
\centering
\caption{MSE and bias of weighted varextropy estimators for the gamma(2,1).}\label{ta1}
\begin{tabular}{ccccccc}
\hline 
$n$ & $\hat{B}$ & $\tilde{B}$ & $\bar{B}$ &$\hat{M}$ & $\tilde{M}$ & $\bar{M}$\\ 
\hline 
 10 &0.004616  & 0.001667 & 0.082726 &0.000054& 0.000042&0.010144 \\ 
 20& 0.003340&0.001070&0.060086&0.000022&0.000014&0.004482\\
30&0.002845&0.000916&0.047863&0.000014&0.000009&0.002681 \\ 
 50 &0.002485&0.000767&0.034503&0.000010&0.000005&0.001327\\
100 &0.002075&0.000659&0.020492&0.000006&0.000003&0.000463\\
\hline
\end{tabular} 
\end{table}

\begin{table}[h!]
\centering
\caption{MSE and bias of weighted varextropy estimators for beta(2,1).}\label{ta2}
\begin{tabular}{ccccccc}
\hline 
$n$ & $\hat{B}$ & $\tilde{B}$ & $\bar{B}$ &$\hat{M}$ & $\tilde{M}$ & $\bar{M}$\\ 
\hline 
 10 &-0.031170  &-0.049123  & -0.041327 &0.002345&0.003305 &0.002517 \\ 
 20& -0.032032&-0.047520&-0.033247&0.001591&0.002632&0.001663 \\
30&-0.031514&-0.045645&-0.028686&0.001405&0.002367&0.001274 \\ 
 50& -0.030104&-0.043037&-0.024984 & 0.001197&0.002070&0.000970\\
100&-0.027820&-0.039374&-0.021544&0.000960&0.001699&0.000697 \\
\hline
\end{tabular} 
\end{table}
\section{Application}
In this section, a few goodness-of-fit tests for reciprocal distribution based on $\hat{V}J^w(X), \tilde{V}J^w(X)$ and $\bar{V}J^w(X)$ are suggested. Monte Carlo simulation is used to obtain the percentage points and power values of the tests. Consider the class of continuous distribution functions $F$ with density function $f$ defined on interval $[a,b]$ where $0<a<b.$ Proposition \ref{p2.22} gives us the idea to use the weighted varextropy estimators as the test statistics of goodness-of-fit tests of reciprocal distribution with distribution function $F(x)=\frac{\ln(\frac{x}{a})}{\ln(\frac{b}{a})},~a<x<b.$ Let $X_1,\ldots, X_n$ be a random sample from a continuous distribution function $F(x)$ on $[a,b].$ The null hypothesis is $H_0: F(x)=\frac{\ln(\frac{x}{a})}{\ln(\frac{b}{a})},~a<x<b,$ and the alternative hypothesis ($H_1$) is the opposite of $H_0.$ Given any significance level $\alpha,$ our hypothesis-testing procedure can be defined by the critical region:
\begin{align*}
G_n=VJ_n^w(X)\geq C_{1-\alpha},
\end{align*}
 where $VJ_n^w(X)$ is one of the estimators and $C_{1-\alpha}$ is the critical value for the test with level $\alpha.$ Since $VJ_n^w(X)$ converges to $VJ^w(X)$ in probability, under $H_0$, $G_n$ converges to  $0$ in probability, and under $H_1,$ $G_n$ converges to a number larger than zero in probability. We employ  following notations to indicate the test statistics for testing the reciprocal distribution
 \begin{align*}
 {\hat{G}}_n&=\hat{V}J^w(X),\nonumber\\
 {\tilde{G}}_n&=\tilde{V}J^w(X),\nonumber\\
  {\bar{G}}_n&=\bar{V}J^w(X). 
 \end{align*}
We compare the powers of our proposed test statistics with Kolmogorov-Smirnov statistic defined by \cite{Kol} and \cite{Smir} as
 \begin{align*}
 KS=\max\left(\max_{1\leq i \leq n}\left\{\frac{i}{n}-X_{(i)}\right\}, \max_{1 \leq i \leq n}\left\{ X_{(i)}-\frac{i-1}{n}\right\} \right),
  \end{align*}
where $X_{(1)}, \ldots, X_{(n)}$ are order statistics. We conduct simulation studies in two following scenarios:\\
{\bf Scenario $1.$} We suppose that $a=\frac{1}{4}$ and $b=1$ and compute the powers of the tests under the following alternative distribution:
\begin{align*}
A_k: F(x)=1-\frac{{(1-x)}^k}{{(1-a)}^k},~ a<x<1,~(for~ k=1.5,2).
 \end{align*}
{\bf Scenario $2.$} We suppose that $a=\frac{1}{4}$ and $b=10$ and compute the powers of the tests under the truncated lognormal distribution: 
\begin{align*} 
TL: F(x)=\frac{\Phi(\ln(x))-\Phi(\ln(\frac{1}{4}))}{\Phi(\ln(10))-\Phi(\ln(\frac{1}{4}))}. 
 \end{align*}
 The critical values are estimated based on $100000$ repetitions and shown in Tables $\ref{ta3}$ and $\ref{ta4}.$ The powers of proposed test statistics and KS statistic are obtained in Tables $\ref{ta5}$ and $\ref{ta6}.$ These powers are estimated based on $100000$ repetitions for $n=10,20,30,40,50$ and $\alpha=0.05.$ The statistic achieving the maximal power is indicated by the bold type in Tables $\ref{ta5}$
 and $\ref{ta6}.$ According to Tables $\ref{ta5}$ and $\ref{ta6},$ the performance of tests depends on alternative distributions. The test based on $\tilde{G}_n$ is the best for $A_k$ distribution, (Table $\ref{ta5}$). Also according to Table $\ref{ta6},$ the test based on $\hat{G}_n$  is the best for the truncated lognormal distribution.
\begin{table}[h!]
\centering
\caption{\small\small\small Percentage points of the proposed test statistics at the level $\alpha=0.05$ for $a=\frac{1}{4}$ and $b=1$. }\label{ta3}
\begin{tabular}{cccc}
\hline 
$n$ & $\hat{G}$ & $\tilde{G}$ & $\bar{G}$ \\ 
\hline 
 10 &0.046630  & 0.036781 & 0.035007  \\ 
20 & 0.034493 & 0.024466 & 0.024606  \\ 
30 & 0.029452 & 0.019787 & 0.021169  \\ 
40 &0.026457  & 0.017086 &  0.016751 \\ 
50 & 0.024594 &  0.015266& 0.015777  \\ 
75 &0.021494  & 0.012537 & 0.012400  \\ 
100 &0.019634  &0.010888  & 0.010811  \\ 
\hline
\end{tabular} 
\end{table} 
% \begin{table}[h!]
%\centering
%\caption{\small\small\small Power comparisons of the tests at the level $\alpha=0.05$ for $a=\frac{1}{4}$ and $b=1$. }\label{ta5}
%\begin{tabular}{cccccc}
%\hline 
%$n$ &Alternative &$\hat{G}$ & $\tilde{G}$ & $\bar{G}$& KS \\ 
%\hline 
 %10 &$A_{1.5}$ & 0.090239& 0.096029& 0.086913& 0.065409 \\ 
%10 & $A_2$& 0.078489& 0.088299& 0.036963& 0.067839  \\ 
%20&$A_{1.5}$& 0.114898&0.134688& 0.094905& 0.101729 \\ 
%20 &$A_2$& 0.097299& 0.114868& 0.044955& 0.092649  \\ 
%30 &$A_{1.5}$& 0.137328&0.162648&0.099900& 0.132968 \\ 
%30 & $A_2$ & 0.120338& 0.137818& 0.052947&0.094499 \\
%40 & $A_{1.5}$ & 0.157698& 0.192638& 0.122877& 0.141478  \\
%40 & $A_2$& 0.146538& 0.162988& 0.063936&0.083259  \\
%50& $A_{1.5}$& 0.174088& 0.220397& 0.127872& 0.149608 \\
%50& $A_2$ & 0.165368& 0.187738& 0.071928&0.076869  \\
%\end{tabular} 
%\end{table} 
%%%%%%%%%%%%%%%%%%%%%%%%%%%%%%%%%%%%%%%
\begin{table}[h!]
\centering
%\caption{\small\small\small Power comparisons of the tests at the level $\alpha=0.05$ for $a=\frac{1}{4}$ and $b=1$. }\label{ta5}
\caption{\small\small\small Percentage points of the proposed test statistics at the level $\alpha=0.05$ for $a=\frac{1}{4}$ and $b=10$. }\label{ta4}
\begin{tabular}{cccc}
\hline 
$n$ & $\hat{G}$ & $\tilde{G}$ & $\bar{G}$ \\ 
\hline 
 10 &0.012298  & 0.009770 &  0.555766 \\ 
20 & 0.008748& 0.006930 & 0.387190 \\ 
30 &0.007379  &0.005752  & 0.312608  \\ 
40 & 0.006644 &0.005101  & 0.268846  \\ 
50 & 0.006147 &  0.004698& 0.240833  \\ 
75 & 0.005427 &0.004021  &  0.201696 \\ 
100 &0.005005  & 0.003643 & 0.170413 \\ 
\hline
\end{tabular} 
\end{table} 
%%%%%%%%%%%%%%%%%%%%%%%%%%%%%%%%%%%%%%%
\begin{table}[h!]
\centering
%\caption{\small\small\small Percentage points of the proposed test statistics at the level $\alpha=0.05$ for $a=\frac{1}{4}$ and $b=10$. }\label{ta4}
\caption{\small\small\small Power comparisons of the tests at the level $\alpha=0.05$ for $a=\frac{1}{4}$ and $b=1$. }\label{ta5}
\begin{tabular}{cccccc}
\hline 
$n$ &Alternative &$\hat{G}$ & $\tilde{G}$ & $\bar{G}$& KS \\ 
\hline 
 10 &$A_{1.5}$ & 0.090239&$\bold{ 0.096029}$& 0.086913& 0.065409 \\ 
10 & $A_2$& 0.078489&$\bold{ 0.088299}$& 0.036963& 0.067839  \\ 
20&$A_{1.5}$& 0.114898&$\bold{0.134688}$& 0.094905& 0.101729 \\ 
20 &$A_2$& 0.097299& $\bold{0.114868}$& 0.044955& 0.092649  \\ 
30 &$A_{1.5}$& 0.137328&$\bold{0.162648}$&0.099900& 0.132968 \\ 
30 & $A_2$ & 0.120338& $\bold{0.137818}$& 0.052947&0.094499 \\
40 & $A_{1.5}$ & 0.157698& $\bold{0.192638}$& 0.122877& 0.141478  \\
40 & $A_2$& 0.146538&$\bold{ 0.162988}$& 0.063936&0.083259  \\
50& $A_{1.5}$& 0.174088& $\bold{0.220397}$& 0.127872& 0.149608 \\
50& $A_2$ & 0.165368& $\bold{0.187738}$& 0.071928&0.076869  \\
\end{tabular} 
\end{table} 
%%%%%%%%%%%%%%%%%%%%%%%%%%%%%%%%%%%%%%%
\begin{table}[h!]
\centering
\caption{\small\small\small Power comparisons of the tests at the level $\alpha=0.05$ for $a=\frac{1}{4}$ and $b=10$. }\label{ta6}
\begin{tabular}{cccccc}
\hline 
$n$ & Alternative &$\hat{G}$ & $\tilde{G}$ & $\bar{G}$&KS \\ 
\hline 
 10 & TL &$\bold{0.089699 }$ & 0.081729& 0.002997& 0.005929  \\ 
20 & TL&$\bold{0.133068}$  &0.090909& 0.003036& 0.006139 \\ 
30 &TL &$\bold{0.240757 }$ &0.130278& 0.003721& 0.006069  \\ 
40 & TL &$\bold{0.378546}$ &0.183418& 0.003912& 0.006039   \\ 
50 &TL  &$\bold{0.520804}$  &0.243817& 0.004265& 0.005889   \\ 
\end{tabular} 
\end{table}
  \section{Real data}
\cite{Kayid} used $42$ dataset of COVID-19 infections gathered from various official sources as March 26, 2020. \cite{Kayid} showed that the data follows an exponential distribution with an estimated parameter $\hat{\lambda}=0.32.$ Table $\ref{ta7}$ shows the values of the proposed estimators based on this data. From Table $\ref{ta7}$ we can see the closeness estimators to the theoretical value $VJ^w(X).$ 
 \begin{table}[h!]
\centering
\caption{\small\small\small Theoretical value and the proposed estimators. }\label{ta7}
\begin{tabular}{cccc}
\hline 
$VJ^w(X)$ & $\hat{V}J^w(X)$ &$\tilde{V}J^w(X)$ &$\bar{V}J^w(X)$ \\ 
\hline 
 0.002893518&0.005168755&0.003840097&0.01658735 \\ 
\end{tabular} 
\end{table} 
%Let $X$ follow $Exp(\lambda)$, then $VJ^w(X)=\int_{0}^{\infty}x$
%%%%%%%%%%%%%%%%%%%%%%%%%%%%%%%%%%%%%%%%%%%%%%%%%%%%%%%%%%%%%%%%
\section{Conclusion}
In this article, we demonstrated through several examples
%illustrated through examples 
that weighted varextropy can be considered as an appropriate measure of variability when the extropy and varextropy are equal for a set of probability distributions. We further showed that, for some distributions, unlike the varextropy, the weighted varextropy does not depend on the parameter, which indicates the flexibility of this measure. 
We also derived bounds for this measure, as well as for the weighted residual and past  varextropy. In addition, an 
explicit expression for the system's lifetime varextropy was obtained under the assumption that the components of the system are independent and identically distributed. Moreover, we introduced a weighted varextropy order and showed that if $X$ follows a  reciprocal distribution, then for all random variable $Y$, the inequality $X\overset{VJ^w}\leq Y$ holds. 
We also showed that, contrary to Proposition 14 in \cite{Vaselabadi}, the condition $X\overset{disp}{\leq}Y$ does not necessarily imply that  $X\overset{VJ}{\geq}Y$. 
Finally, two estimators for the weighted varextropy were proposed, and their consistency was rigorously established. Several tests for the reciprocal distribution were developed using the proposed estimators, and their powers were compared with that of the Kolmogorov–Smirnov (KS) test. An application to real data was also presented.
%and proved the consistency of the proposed estimators.
%************************************************************************************************************************
%
%                        Example
%
%*************************************************************************************************************************
%************************************************************************************************************************%

\end{document}